\crefname{hypothesis}{Hypothesis}{Hypotheses}
\title{Strong convergence of a vector-BGK model\\ to the incompressible Navier-Stokes equations\\
via the relative entropy method %\thanks{Submitted to the editors DATE.
%\funding{This work was funded by the Fog Research Institute under contract no.~FRI-454.}}}
}
\author{Roberta Bianchini\thanks{\'Ecole Normale Sup\'erieure de Lyon, UMPA, ENS-Lyon, 46, all\'ee d'Italie, 69364-Lyon Cedex 07, France and Consiglio Nazionale delle Ricerche, IAC, via dei Taurini 19, I-00185 Rome, Italy. Mail address: roberta.bianchini@ens-lyon.fr}}
\definecolor{orange}{rgb}{1,0.5,0}
\definecolor{amethyst}{rgb}{0.6, 0.4, 0.8}
\definecolor{ngreen}{rgb}{0.0, 0.5, 0.0}
\definecolor{brown}{rgb}{0.43, 0.21, 0.1}
\definecolor{burgundy}{rgb}{0.5, 0.0, 0.13}
\begin{document}

\maketitle

% REQUIRED
\begin{abstract}
The aim of this paper is to prove the strong convergence of the solutions to a vector-BGK model under the diffusive scaling to the incompressible Navier-Stokes equations on the two-dimensional torus. This result holds in any interval of time $[0, T]$, with $T>0$.
We also provide the global in time uniform boundedness of the solutions to the approximating system. Our argument is based on the use of local in time $H^s$-estimates for the model, established in a previous work, combined with the $L^2$-relative entropy estimate and the interpolation properties of the Sobolev spaces.
\end{abstract}

% REQUIRED
\begin{keywords}
Vector-BGK models, incompressible Navier-Stokes equations, dissipative entropy, relative entropy, entropy inequality, diffusive relaxation.
\end{keywords}

% REQUIRED
%\begin{AMS}
 
%\end{AMS}

\section{Introduction}
\label{Intro}
In this paper we deal with the incompressible Navier-Stokes equations in two space dimensions,
\begin{equation}
\label{real_NS}
\begin{cases}
& \partial_{t}\textbf{u}^{NS}+\nabla \cdot (\textbf{u}^{NS} \otimes \textbf{u}^{NS}) + \nabla P^{NS}={\nu} \Delta \textbf{u}^{NS}, \\
& \nabla \cdot \textbf{u}^{NS}=0, \\
\end{cases}
\end{equation}
with $(t, x) \in [0, +\infty) \times \mathbb{T}^2,$ and initial data

\begin{equation}
\label{real_NS_initial_data}
\textbf{u}^{NS}(0,x)=\textbf{u}_{0}(x), \,\,\,\,\, \quad \,\,\,\,\, \nabla \cdot \textbf{u}_0=0.
\end{equation}

In (\ref{real_NS}), $\textbf{u}^{NS}$ and $\nabla P^{NS}$ are respectively the velocity field and the gradient of the pressure term, and $\nu>0$ is the viscosity coefficient.\\
Here we consider a vector-BGK model for the incompressible Navier-Stokes equations, i.e. a dicrete velocities BGK system endowed with a vectorial structure, whose general formulation has been introduced in \cite{CN}, while further developments were presented in \cite{VBouchut} from the numerical side and in \cite{Bianchini3} from the analytical point of view. Precisely, we study the following five velocities (15 equations) vector-BGK approximation to the incompressible Navier-Stokes equations,
\begin{equation}
\label{BGK_NS}
\begin{cases}
& \partial_{t}f_{1}^{\varepsilon} + \frac{\lambda}{\varepsilon} \partial_{x}f_{1}^{\varepsilon}=\frac{1}{\tau \varepsilon^{2}}(M_{1}(w^{\varepsilon})-f_{1}^{\varepsilon}), \\
& \partial_{t}f_{2}^{\varepsilon} + \frac{\lambda}{\varepsilon} \partial_{y}f_{2}^{\varepsilon}=\frac{1}{\tau \varepsilon^{2}}(M_{2}(w^{\varepsilon})-f_{2}^{\varepsilon}), \\
& \partial_{t}f_{3}^{\varepsilon}-\frac{\lambda}{\varepsilon}\partial_{x}f_{3}^{\varepsilon}=\frac{1}{\tau \varepsilon^{2}}(M_{3}(w^{\varepsilon})-f_{3}^{\varepsilon}), \\
& \partial_{t}f_{4}^{\varepsilon}-\frac{\lambda}{\varepsilon}\partial_{y}f_{4}^{\varepsilon}=\frac{1}{\tau \varepsilon^{2}}(M_{4}(w^{\varepsilon})-f_{4}^{\varepsilon}), \\
& \partial_{t}f_{5}^{\varepsilon}=\frac{1}{\tau \varepsilon^{2}}(M_{5}(w^{\varepsilon})-f_{5}^{\varepsilon}), \\
\end{cases}
\end{equation}
where
\begin{equation}
\label{w_def}
w^\varepsilon=(\rho^\varepsilon, \varepsilon \rho^\varepsilon u_1^\varepsilon, \varepsilon \rho^\varepsilon u_2^\varepsilon)=(\rho^\varepsilon, \varepsilon \rho^\varepsilon \textbf{u}^\varepsilon)=(\rho^\varepsilon, \textbf{q}^\varepsilon)=\sum_{i=1}^5 f_i^\varepsilon.
\end{equation}
Its main properties are as follows:
\begin{itemize}
\item $f_i^\varepsilon, \, M_i(w^\varepsilon),\; i=1, \cdots 5$, are vector-valued functions taking values in $\mathbb{R}^3$;
\item $\rho^\varepsilon(t, x)$ on $\mathbb{R}^+ \times \mathbb{T}^2$ is the approximating density, taking values in $\mathbb{R}^+$;
\item $\textbf{u}^\varepsilon(t, x)=(u_1^\varepsilon(t, x), u_2^\varepsilon(t, x))$ on $\mathbb{R}^+ \times \mathbb{T}^2$ is the approximating vector field, taking values in $\mathbb{R}^2$;
\item the discrete velocities are $\lambda_1=(\lambda, 0), \; \lambda_2=(0, \lambda), \; \lambda_3=(-\lambda, 0),$\\ $\lambda_4=(0, -\lambda), \; \lambda_5=(0,0),$ where $\lambda$ is a positive constant value.
\end{itemize}
Precise compatibility conditions to be satisfied by the constant parameters of the model and the Maxwellian functions, together with their explicit expressions, will be provided in details in Section \ref{Presentation_section}.\\
BGK models were introduced by Bhatnagar, Gross and Krook as a modified
version of the Boltzmann equation, characterized by the relaxation of the collision operator. Since they present most of the basic properties of hydrodynamics, they are considered interesting models even though they do not contain all of the relevant features of the Boltzmann equation. Essentially, vector-BGK models are inspired by the hydrodynamic limits of the Boltzmann equation \cite{Golse1, Golse2, Cercignani, Esposito, Laure}, but later they have been generalized as approximating equations for different kinds of systems. In this regard, one of the main directions has been the approximation of hyperbolic systems with discrete velocities BGK models, as in \cite{Brenier, XinJin, Natalini, Bouchut, Perthame1}. Similar results have been obtained for convection-diffusion systems under the diffusive scaling \cite{Toscani, BGN, Lattanzio, Aregba2, JinPareschi, Laurent}. Originally,
they presented continuous velocities, see \cite{Perthame1}, but later on discrete velocities BGK models inspired by the relaxation method have been introduced, see \cite{Mascia} for a survey. In the spirit of the relaxation approximations, the main advantage of discrete velocities BGK models is to deal with semilinear systems, see \cite{Natalini, BNP, Imene1}.\\
Here we spend few words on our main result and we provide a sketch of the strategy.
We prove the strong convergence in the Sobolev spaces, for any interval of time $[0, T], \; T>0$, of the vector-BGK model presented in (\ref{BGK_NS}) to the incompressible Navier-Stokes equations on the two-dimensioanl torus.
To achieve this result, the novelty relies in using local in time $H^s$-estimates from a previous work, see \cite{Bianchini3}, combined with the $L^2$-relative entropy estimate and the standard interpolation Theorem. More precisely, part of the results of \cite{Bianchini3} provides uniform (in $\varepsilon$) estimates of Gronwall type in the Sobolev spaces, which hold in $[0, T^*]$, where $T^*>0$ is depending on a fixed constant $M>0$ and on the norm of the initial data. These local bounds guarantee the existence, the minimality and dissipative property of the kinetic entropy, i.e. a convex entropy for (\ref{BGK_NS}), see \cite{Bouchut}. Next, the relative entropy allows us to get a precise rate of convergence of the solutions to our model to the Navier-Stokes equations, which holds for $t \in [0, T^*]$, see Theorem \ref{Theorem_L2_estimate}. Thus, the interpolation Theorem for Sobolev spaces applied to the relative entropy estimate provides a bound for the solutions to our system which is much more precise than the previous pessimistic Gronwall type estimates. 
This is the key point in order to close the argument and to prove the strong convergence for all times of the solutions to (\ref{BGK_NS}) to (\ref{real_NS}), together with the global in time boundedness of the approximating solution itself, in Theorem \ref{Theorem_entropy_inequality_NS}.
In particular, Lemma \ref{lemma_derivatives-BGK} plays a crucial role in quantifying the dissipation term coming from the entropy inequality. 
At the best of our understanding, the expansions in Lemma \ref{lemma_derivatives-BGK} are the only way to establish the relative entropy inequality when, as in our case, the explicit dependency of the kinetic entropy on the singular parameter is not known.\\
We point out that we start from initial data in (\ref{initial_conditions_BGK}) that are small perturbation of the Maxwellians and, thanks to the uniform bounds, in the end we prove that everything is done in a bounded set of the densities. This local setting perfectly fits the framework described in \cite{Bouchut}.\\
The relative entropy method, \cite{Dafermos1, DiPerna}, represents 
an efficient mathematical tool for studying stability and limiting process and it is based on a direct calculation of the relative entropy between
a dissipative solution and a conservative smooth solution for the
considered system, which provides a remarkable stability
estimate.  Far from being complete, we collect here a pair of references for hydrodynamic limits \cite{GS, Laure2}. In the context of singular hyperbolic scaled systems, we remind to \cite{Tzavaras}. Let us point out that this procedure has been successfully applied to the vector-BGK model considered in this paper and presented below (\ref{BGK_NS}) to prove its convergence to the isentropic Euler equations under the hyperbolic scaling, see \cite{Sepe}. Again, relative entropy in hyperbolic relaxation has been used for one-dimensional discrete velocities Boltzmann schemes, see \cite{Tzavaras1}, while in the multidimensional case the question in this context seems to be open. On the other hand, the relative entropy method in diffusive relaxation is of course a more delicate issue, being the diffusive limit an order more precise approximation of the starting system in the Chapman-Enskog expansion, see \cite{LSR}. Besides hydrodynamic limits of the Boltzmann equation, our main reference in this framework is \cite{Lattanzio}. In this paper, the authors apply the relative entropy method to the equations of compressible gas dynamics with friction under the diffusive scaling, so obtaining precise estimates coming from the entropy of the limit hyperbolic system. However, in our case, further complications are due to the fact that the explicit dependency of the kinetic entropy on the singular parameter is not known for our model (\ref{BGK_NS}). The BGK framework in \cite{Bouchut} only guarantees the existence of such an entropy, whose expression is defined by means on the inverse function Theorem. This difficulty requires a better understanding of the dissipative terms provided by the entropy inequality in diffusive relaxation, and new ideas are needed with respect to the existing works, for instance \cite{Lattanzio, Tzavaras1}.
\subsection{Plan of the paper}
The paper is organized as follows. In Section \ref{Presentation_section} we introduce the vector-BGK model and provide some preliminary results. Section \ref{REsection} is devoted to the relative entropy inequality and the strong convergence of the model for all times, in the Sobolev spaces. In the last part of this section we also show the global in time boundedness of the solutions to our model.

\section{Presentation of the model, formal limit, and intermediate results}
\label{Presentation_section}
First, we aim at providing a relative entropy inequality for a vector-BGK model approximating the two-dimensional incompressible Navier-Stokes equations. After, this inequality will allow us to extend for long times the local convergence for smooth solutions achieved in \cite{Bianchini3}. Let us introduce the setting that will be taken into account hereafter.\\
Our approximating vector-BGK model has been presented in (\ref{BGK_NS}), together with a list of the main properties. We point out that, in order to get consistency with the incompressible Navier-Stokes equations, the Maxwellian functions $M_i(w^\varepsilon), \; i=1, \cdots, 5,$ need to satisfy the following compatibility conditions:
\begin{itemize}
\item $\sum_{i=1}^5 M_i(w^\varepsilon)=w^\varepsilon$; 
\item $\sum_{i=1}^5 \lambda_{ij} M_i(w^\varepsilon)=A_j(w^\varepsilon), \quad j=1, 2$, with $A_j$ in (\ref{Fluxes_BGK}).
\end{itemize}
We provide here the explicit expressions of the Maxwellian functions
\begin{equation}
\label{Maxwellians}
M_{1,3}(w^{\varepsilon})=aw^{\varepsilon} \pm \frac{A_{1}(w^{\varepsilon})}{2\lambda}, ~~~~~ M_{2,4}(w^\varepsilon)=aw^{\varepsilon} \pm \frac{A_{2}(w^{\varepsilon})}{2\lambda}, ~~~~~ M_{5}(w^{\varepsilon})=(1-4a)w^{\varepsilon},
\end{equation}
\begin{equation}
\label{Fluxes_BGK}
A_{1}(w^{\varepsilon})=\left(\begin{array}{c}
q_{1}^\varepsilon \\
\frac{(q_{1}^\varepsilon)^{2}}{\rho^\varepsilon}+P(\rho^\varepsilon)\\
\frac{q_{1}^\varepsilon q_{2}^\varepsilon}{\rho^\varepsilon}
\end{array}\right), ~~~~~
A_{2}(w^{\varepsilon})=\left(\begin{array}{c}
q_{2}^\varepsilon \\
\frac{q_{1}^\varepsilon q_{2}^\varepsilon}{\rho^\varepsilon} \\
\frac{(q_{2}^\varepsilon)^{2}}{\rho^\varepsilon}+P(\rho^\varepsilon)
\end{array}\right),
\end{equation}
\begin{equation}
\label{Pressure_approximation_BGK}
P(\rho^\varepsilon)=\frac{({(\rho^\varepsilon)^2}-\bar{\rho}^2)}{2\bar{\rho}},
\end{equation}
where $\bar{\rho}>0$ is constant value, and the following constraint has to be satisfied in order to get consistency with respect to (\ref{real_NS}), see \cite{CN}.
\begin{hypothesis}
\label{assumptions_a}
Let us assume
\begin{equation}
\label{a_def_BGK}
a=\frac{\nu}{2\lambda^{2}\tau}, \qquad 0 < a < \frac{1}{4},
\end{equation}
where $\nu$ is the viscosity coefficient in (\ref{real_NS}). Moreover, we also take the parameter $\lambda>0$ big enough, whose lower bound is defined in [\cite{Bianchini3}, Assumption 2]. This is necessary in order to:
\begin{itemize}
\item guarantee the positivity of the symmetrizer in \cite{Bianchini3};
\item satisfy the sub-characteristic condition, i.e. the positivity of the spectrum of the Jacobian matrices of the Maxwellians, see \cite{Bouchut, NataliniCPAM}.
\end{itemize}
\end{hypothesis}
The change of variables introduced in \cite{Bianchini3},
\begin{equation}
\label{variables_BGK}
\begin{aligned}
& w^{\varepsilon}=\sum_{i=1}^{5}f_{i}^{\varepsilon}, \quad
m^{\varepsilon}=\frac{\lambda}{\varepsilon}(f_{1}^{\varepsilon}-f_{3}^{\varepsilon}), \quad \xi^{\varepsilon}=\frac{\lambda}{\varepsilon}(f_{2}^{\varepsilon}-f_{4}^{\varepsilon}), \\
& k^{\varepsilon}=f_{1}^{\varepsilon}+f_{3}^{\varepsilon}, \qquad
h^{\varepsilon}=f_{2}^{\varepsilon}+f_{4}^{\varepsilon}.
\end{aligned}
\end{equation}
allows us to recover the consistency with respect to (\ref{real_NS}) in a simple way at the formal level. This way, the vector-BGK model (\ref{BGK_NS}) reads:
\begin{equation}
\label{BGK_NS_new_variables}
\begin{cases}
& \partial_{t}w^{\varepsilon}+\partial_{x}m^{\varepsilon}+\partial_{y}\xi^{\varepsilon}=0, \\
& \partial_{t}m^{\varepsilon} + \frac{\lambda^{2}}{\varepsilon^{2}}\partial_{x}k^{\varepsilon}=\frac{1}{\tau \varepsilon^{2}}(\frac{A_{1}(w^{\varepsilon})}{\varepsilon}-m^{\varepsilon}), \\
& \partial_{t}\xi^{\varepsilon}+\frac{\lambda^{2}}{\varepsilon^{2}}\partial_{y}h^{\varepsilon}=\frac{1}{\tau \varepsilon^{2}}(\frac{A_{2}(w^{\varepsilon})}{\varepsilon}-\xi^{\varepsilon}), \\
& \partial_{t}k^{\varepsilon}+\partial_{x}m^{\varepsilon}=\frac{1}{\tau \varepsilon^{2}}(2aw^{\varepsilon}-k^{\varepsilon}), \\
& \partial_{t}h^{\varepsilon}+\partial_{y}\xi^{\varepsilon}=\frac{1}{\tau \varepsilon^{2}}(2aw^{\varepsilon}-h^{\varepsilon}). \\
\end{cases}
\end{equation}
Hereafter, we will drop the apex $\varepsilon$ where there is no ambiguity.
Moreover, we denote by $\mathcal{M}_i(w):= {f}_i$ the solutions to system (\ref{BGK_NS}) after taking the limit under the diffusive scaling. 
The relaxation formulation (\ref{BGK_NS_new_variables}) of the system gives 
\begin{equation}
\label{limit_solution_relaxation_variables}
\begin{aligned}
& m=\frac{\lambda}{\varepsilon}({f}_1-{f}_3):=\frac{\lambda}{\varepsilon}(\mathcal{M}_1({w})-\mathcal{M}_3({w}))=\frac{A_1({w})}{\varepsilon}-\tau \lambda^2 \partial_x {k} + O(\varepsilon^2), \\
& {\xi}=\frac{\lambda}{\varepsilon}({f}_2-{f}_4):=\frac{\lambda}{\varepsilon}(\mathcal{M}_2({w})-\mathcal{M}_4({w}))=\frac{A_2({w})}{\varepsilon}-\tau \lambda^2 \partial_y {h} + O(\varepsilon^2), \\
& {k}={f}_1+{f}_2=\mathcal{M}_1(\bar{w})+\mathcal{M}_2({w})=2a {w}+O(\varepsilon^2), \\
& {h}={f}_2+{f}_4=\mathcal{M}_2({w})+\mathcal{M}_4(w)=2a {w}+O(\varepsilon^2).
\end{aligned}
\end{equation}

Recalling that, from Assumptions \ref{assumptions_a} $\nu=2a \tau \lambda^2$, formally we get
$$\partial_t w+\frac{\partial_x A_1(w)}{\varepsilon}+ \frac{\partial_y A_2(w)}{\varepsilon}=\nu \Delta w+O(\varepsilon^2).$$
More explicitly, from the expressions of $w, A_1(w), A_2(w)$ in (\ref{w_def})-(\ref{Fluxes_BGK}),
\begin{align*}
\partial_t \left(\begin{array}{c}
\rho-\bar{\rho}\\
\varepsilon \rho u_1\\
\varepsilon \rho u_2\\
\end{array}\right)&+\partial_x \left(\begin{array}{c}
\rho u_1\\
\varepsilon \rho u_1^2+\frac{\rho^2-\bar{\rho}^2}{2\bar{\rho }\varepsilon}\\\
\varepsilon \rho u_1 u_2
\end{array}\right)+\partial_y \left(\begin{array}{c}
\rho u_2\\
\varepsilon \rho u_1 u_2\\
\varepsilon \rho u_2^2+\frac{\rho^2-\bar{\rho}^2}{2\bar{\rho }\varepsilon}\\
\end{array}\right)\\
&=\nu \Delta \left(\begin{array}{c}
\rho-\bar{\rho}\\
\varepsilon \rho u_1\\
\varepsilon \rho u_2
\end{array}\right)+O(\varepsilon^2).
\end{align*}

Dividing the last two lines by $\varepsilon$, this yields

\begin{equation*}
\begin{cases}
& \partial_t (\rho-\bar{\rho})+\nabla \cdot \textbf{u}=\nu \Delta (\rho-\bar{\rho})+O(\varepsilon), \\
& \partial_t (\rho \textbf{u})+\nabla \cdot (\rho \textbf{u} \otimes \textbf{u})+ \frac{\nabla(\rho^2-\bar{\rho}^2)}{2\bar{\rho} \varepsilon^2}=\nu \Delta (\rho \textbf{u})+O(\varepsilon),
\end{cases}
\end{equation*}

which is the compressible approximation to the incompressible Navier-Stokes equations provided by the scaled isentropic Euler equations.\\
Now we find an expression of the formal limit in terms of the original kinetic variables (\ref{BGK_NS}).
The limit solution is obtained by solving the linear system (\ref{limit_solution_relaxation_variables}) in the unknowns $\mathcal{M}_i(w), \; i=1, \cdots, 5$, so providing
\begin{equation}
\label{Perturbed_Maxwellians}
\begin{aligned}
& {\mathcal{M}}_1(w)=M_1(w)-a \varepsilon \lambda \tau \partial_x w, \\
& {\mathcal{M}}_2(w)=M_2(w)-a \varepsilon \lambda \tau \partial_y w, \\
& {\mathcal{M}}_3(w)=M_3(w)+a \varepsilon \lambda \tau \partial_x w, \\
& {\mathcal{M}}_4(w)=M_4(w)+a \varepsilon \lambda \tau \partial_y w, \\
& {\mathcal{M}}_5(w)=M_5(w).
\end{aligned}
\end{equation}
In order to avoid further complications due to the initial layer, in our convergence proof the two-dimensional vector-BGK model is endowed with the following initial data:
\begin{equation}
\label{initial_conditions_BGK}
f_{i}^\varepsilon(0,x)=\overline{\mathcal{M}}_i(\bar{\rho}, \varepsilon \bar{\rho} \bar{\textbf{u}}_0), \quad i=1, \cdots, 5,
\end{equation}
where $\textbf{u}_0$ is in (\ref{real_NS_initial_data}) and $\bar{\rho}$ is a positive constant value.
According to the theory developed by Bouchut \cite{Bouchut}, the existence of a kinetic entropy for system (\ref{BGK_NS}) is subjected to the existence of a convex entropy for the limit solution to (\ref{BGK_NS}) under the hyperbolic scaling. The hyperbolic parameter of the vector-BGK approximation (\ref{BGK_NS}) is represented by $\tau$ and the limit equations approximated by (\ref{BGK_NS}) in the vanishing parameter of the hyperbolic scaling $\tau$ are the isentropic Euler equations. The convergence of the hyperbolic-scaled system is guaranteed by the structural properties of our vector-BGK model listed before, see \cite{CN}, while a rigorous proof is provided in \cite{Sepe}. A convex entropy for the limit equation in hyperbolic scaling, i.e, the isentropic Euler equations, is given by
\begin{equation}
\label{entropy_isentropic_Euler}
\eta(w^\varepsilon)=\frac{1}{2}\frac{|\textbf{q}^\varepsilon|^2}{ \rho^\varepsilon}+k(\rho^\varepsilon)^2.
\end{equation}
\subsection{Preliminary results}
Here we collect some preliminary results, which hold for local times, essentially due to our previous work \cite{Bianchini3}. Let us start with the following remark.

\begin{remark}
\label{Remark_previous_setting}
We discuss some differences between \cite{Bianchini3} and our current setting.
\begin{itemize}
\item In \cite{Bianchini3}, the compressible pressure $P(\rho^\varepsilon)$ in (\ref{Pressure_approximation_BGK}) is linear. More precisely,
from [\cite{Bianchini3}, (10)],
$$\tilde{P}(\rho^\varepsilon)=\rho^\varepsilon-\bar{\rho}.$$
In this paper, we consider the case of a quadratic pressure $P(\rho^\varepsilon)$ in (\ref{Pressure_approximation_BGK}).
A simple remark shows that, from (\ref{Pressure_approximation_BGK}),
\begin{align*}
P(\rho^\varepsilon)&=\frac{(\rho^\varepsilon)^2-\bar{\rho}^2}{2\bar{\rho}}\\
&=\frac{2\bar{\rho} (\rho^\varepsilon-\bar{\rho})+(\rho^\varepsilon-\bar{\rho})^2}{2\bar{\rho}}\\
&=(\rho^\varepsilon-\bar{\rho})+\frac{(\rho^\varepsilon-\bar{\rho})^2}{2\bar{\rho}}.
\end{align*}
Thus, the estimates in \cite{Bianchini3} still hold here: the quadratic pressure only provides an additional quadratic term in the fifth and the ninth line of the nonlinear vector $N(w+\bar{w})$ in [\cite{Bianchini3}, (26)]. These supplementary quadratic terms can be handled exactly as the other ones in the energy estimates in \cite{Bianchini3}.
However, we point out that the same argument holds exactly in the same way for a general compressible pressure
$$P(\rho^\varepsilon)=
\begin{cases}
& \frac{k}{\gamma-1}[(\rho^\varepsilon)^\gamma-\bar{\rho}^\gamma], \quad \gamma > 1, \\
& k[\rho^\varepsilon log(\rho^\varepsilon)-\bar{\rho}log(\bar{\rho})], \quad \gamma=1,
\end{cases}$$
where $k$ is a positive constant value.

\item In [\cite{Bianchini3}, (18)-(19)], we consider a translated version of the relaxation system (\ref{BGK_NS_new_variables}). Of course this is an equivalent formulation of the approximating model, and since the translation vector $(\bar{\rho}, 0, 0)$ in [\cite{Bianchini3}, (18)] is constant in $t$ and $x$, most of the energy estimates in \cite{Bianchini3} can be used here.

\item A further change of variables, involving the dissipative constant right symmetrizer $\Sigma$ in [\cite{Bianchini3}, (28)] is defined in [\cite{Bianchini3}, (30)]. However, here the energy estimates from \cite{Bianchini3} are expressed in terms of the original relaxation variables (\ref{variables_BGK}) to avoid further complications. The explicit change of variables is written in [\cite{Bianchini3}, (78)].

%\item In \cite{Bianchini3}, the spatial domain of the local in time convergence of system (\ref{BGK_NS}) to the incompressible Navier-Stokes equations (\ref{real_NS}) is the two-dimensional torus $\mathbb{T}^2$.  This was useful for the compactness argument in [Section 5, \cite{Bianchini3}], since the Sobolev embeddings were compact. However, the energy estimates before [\cite{Bianchini3}, Sect. 5] still hold in $\mathbb{T}^2$.
\end{itemize}
\end{remark}

Taking into account Remark \ref{Remark_previous_setting}, we state some results that will be applied below. Hereafter, we denote by $T^\varepsilon$ the maximum time of existence of the solution to the semilinear  vector-BGK approximation (\ref{BGK_NS}) with initial data (\ref{initial_conditions_BGK}), see \cite{Majda}. Of course $T^\varepsilon$ could depend on $\varepsilon$. In the following, we recall and adapt some results from \cite{Bianchini3}, showing that there exist $\varepsilon_0$ and a fixed and positive time $T^*$, independent of $\varepsilon$ and depending on the Sobolev norm of the initial data, such that, for $\varepsilon \le \varepsilon_0$, some local in time $H^s$-estimates on the solutions to the approximating system hold uniformly with respect to $\varepsilon$.
In this context, we consider the constant vector $(\bar{\rho}, 0, 0)$ and the translated variables
\begin{equation}
\label{translated_variables}
\begin{aligned}
& w^*(t, x)=w(t, x)-(\bar{\rho}, 0, 0),\\
& k^*(t, x)=k(t. x)-2a (\bar{\rho}, 0, 0),\\  
& h^*(t, x)=h(t. x)-2a (\bar{\rho}, 0, 0).
\end{aligned}
\end{equation}

\begin{lemma}
\label{lemma_energy_estimates_KRM_paper}
Consider the vector-BGK system (\ref{BGK_NS}) with initial data (\ref{initial_conditions_BGK}), and $\textbf{u}_0$ in (\ref{real_NS_initial_data}) beloging to $H^s(\mathbb{T}^2)$, for $s>3$. 
Then, the following estimates hold true.
\begin{equation}
\label{s_integral_estimate_KRM_paper}
\begin{aligned}
\| w^*(t)\|_s^2 &+ \varepsilon^2 (\|m(t)\|_s^2 + \|\xi(t)\|_s^2)+ \|k^*(t)\|_s^2+\|h^*(t)\|_s^2\\
& + \int_0^T  \frac{1}{\varepsilon^2}\|w^*(\theta)\|_s^2 + \|m(\theta)\|_s^2 + \|\xi(\theta)\|_s^2+ \frac{1}{\varepsilon^2}(\|k^*(\theta)\|_s^2+\|h^*(\theta)\|_s^2) \, d\theta\\
&\le c \varepsilon^2 (\|\textbf{u}_0\|_s^2+\|\nabla \textbf{u}_0\|_s^2) \\
& + c(|\rho|_{L_t^\infty L_x^\infty}, \; |\textbf{u}|_{L_t^\infty L_x^\infty}) \int_0^T \| w^*(\theta)\|_s^2 + \varepsilon^2 (\|m(\theta)\|_s^2 + \|\xi(\theta)\|_s^2) \, d\theta \\
& + c(|\rho|_{L_t^\infty L_x^\infty}, \; |\textbf{u}|_{L_t^\infty L_x^\infty}) \int_0^T \|k^*(\theta)\|_s^2+\|h^*(\theta)\|_s^2 \, d\theta, \quad t < T^\varepsilon.
\end{aligned}
\end{equation}
\vspace{3mm}
\begin{equation}
\label{s_energy_estimate}
\begin{aligned}
\| w^*(t)\|_s^2 &+ \varepsilon^2 (\|m(t)\|_s^2 + \|\xi(t)\|_s^2)+\|k^*(t)\|_s^2+\|h^*(t)\|_s^2\\
&\le c \varepsilon^2 (\|\textbf{u}_0\|_s^2+\|\nabla \textbf{u}_0\|_s^2) e^{c(|\rho|_{L_t^\infty L_x^\infty}, \; |\textbf{u}|_{L_t^\infty L_x^\infty})t}, \quad t<T^\varepsilon.
\end{aligned}
\end{equation}
\vspace{3mm}
\begin{equation}
\label{s_1_time_derivative_estimate}
\begin{aligned}
\|\partial_t w^*(t)\|&_{s-1}^2 + \varepsilon^2 (\|\partial_t m(t)\|_{s-1}^2 + \|\partial_t \xi(t)\|_{s-1}^2)+ \|\partial_t k^*(t)\|_{s-1}^2+\|\partial_t h^*(t)\|_{s-1}^2\\
&\le c \varepsilon^2 (\|\textbf{u}_0\|_{s-1}^2+\|\nabla \textbf{u}_0\|_{s-1}^2+\|\nabla^2 \textbf{u}_0\|^2_{s-1}) e^{c(|\rho|_{L_t^\infty L_x^\infty}, \; |\textbf{u}|_{L_t^\infty L_x^\infty})t}, \quad t<T^\varepsilon.
\end{aligned}
\end{equation}
Moreover, there exists $\varepsilon_0, M$ and $T^*<T^\varepsilon$ fixed such that, for $\varepsilon \le \varepsilon_0$, 
\begin{equation}
\label{infty_original_uniform_bounds_KRM_paper}
|\rho \textbf{u}(t)|_\infty \le M, \quad |\rho(t)-\bar{\rho}|_\infty \le \varepsilon M, \quad t \in [0, T^*],
\end{equation}
\begin{equation}
\label{infty_uniform_bounds_KRM_paper}
|\rho(t)|_\infty \le \bar{\rho} + \varepsilon M, \quad |\textbf{u}(t)|_\infty \le \frac{M}{\bar{\rho}+\varepsilon M}, \quad t \in [0, T^*].
\end{equation}
\begin{equation}
\label{L1_Hs_pressure}
\int_0^T |\rho(t)-\bar{\rho}|_\infty \, dt \le  c(M)\varepsilon^2, \quad T \in [0, T^*].
\end{equation}
\end{lemma}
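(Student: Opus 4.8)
The plan is to split the statement into two groups. The first three displays, (\ref{s_integral_estimate_KRM_paper})--(\ref{s_1_time_derivative_estimate}), are essentially the $H^s$ energy estimates of \cite{Bianchini3}, and I would reduce to that reference after accounting for the quadratic pressure; the uniform $L^\infty$ bounds (\ref{infty_original_uniform_bounds_KRM_paper})--(\ref{L1_Hs_pressure}) would then follow from the energy estimates by a continuation (bootstrap) argument.

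For the first group I would rewrite (\ref{BGK_NS}) in the translated relaxation variables (\ref{translated_variables}) of system (\ref{BGK_NS_new_variables}), symmetrize with the dissipative right symmetrizer of \cite{Bianchini3}, and run the standard $H^s$ energy method on $\mathbb{T}^2$: the stiff relaxation terms produce on the left-hand side the nonnegative dissipation $\varepsilon^{-2}(\|w^*\|_s^2+\|k^*\|_s^2+\|h^*\|_s^2)+\|m\|_s^2+\|\xi\|_s^2$, while the convective terms, treated with Moser-type product and commutator inequalities valid for $s>3$, are bounded by $c(|\rho|_{L^\infty_{t,x}},|\mathbf{u}|_{L^\infty_{t,x}})$ times the energy. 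As recorded in Remark \ref{Remark_previous_setting}, replacing the linear pressure of \cite{Bianchini3} by the quadratic one in (\ref{Pressure_approximation_BGK}) only inserts the smooth term $(\rho^\varepsilon-\bar\rho)^2/(2\bar\rho)$, which vanishes at equilibrium, into two components of the nonlinearity, and this is absorbed by the same estimates after enlarging $c(|\rho|_{L^\infty_{t,x}})$; this gives (\ref{s_integral_estimate_KRM_paper}). The initial data (\ref{initial_conditions_BGK}) together with the expansions (\ref{Perturbed_Maxwellians}) make the initial energy $\le c\varepsilon^2(\|\mathbf{u}_0\|_s^2+\|\nabla\mathbf{u}_0\|_s^2)$; dropping the dissipation and applying Gronwall yields (\ref{s_energy_estimate}), and repeating the scheme after one time differentiation at regularity $s-1$, using the equations to bound $\partial_t$ of the unknowns at $t=0$ by the data (and by $\nabla^2\mathbf{u}_0$ through the stiff source), yields (\ref{s_1_time_derivative_estimate}).

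For the second group I would bootstrap. By $H^s(\mathbb{T}^2)\hookrightarrow L^\infty(\mathbb{T}^2)$ (valid since $s>3$), $|\rho-\bar\rho|_\infty\le C_s\|w^*\|_s$ and $\varepsilon|\rho u_j|_\infty\le C_s\|w^*\|_s$. I would fix $M$ so that $M>2C_s$ times the square root of the right-hand side constant in (\ref{s_energy_estimate}), and then fix $T^*>0$ depending only on $M$ (through an a priori bound $\Lambda$ for $c(\cdot,\cdot)$ on the range $|\rho|\le\bar\rho+M$, $|\mathbf{u}|\le M/\bar\rho$) with $e^{\Lambda T^*}\le 2$. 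On the maximal subinterval of $[0,T^\varepsilon)$ where $|\rho(s)-\bar\rho|_\infty\le\varepsilon M$ and $|\rho\mathbf{u}(s)|_\infty\le M$, the exponential constant in (\ref{s_energy_estimate}) is $\le\Lambda$, so for $\varepsilon\le\varepsilon_0$ one gets $\|w^*(t)\|_s^2\le 2c\varepsilon^2(\|\mathbf{u}_0\|_s^2+\|\nabla\mathbf{u}_0\|_s^2)$ on $[0,T^*]$ intersected with that interval, hence $|\rho-\bar\rho|_\infty<\varepsilon M$ and $|\rho\mathbf{u}|_\infty<M$ strictly; since the inequalities are strict at $t=0$ for $\varepsilon$ small and are preserved, a continuity argument forces the interval to be all of $[0,T^*]$ with $T^*<T^\varepsilon$, which gives (\ref{infty_original_uniform_bounds_KRM_paper}) and, immediately, (\ref{infty_uniform_bounds_KRM_paper}). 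Finally, on $[0,T^*]$ the constants in (\ref{s_integral_estimate_KRM_paper}) are $\le c(M)$, so Gronwall in integral form gives $\int_0^T\varepsilon^{-2}\|w^*\|_s^2\,d\theta\le c(M)\varepsilon^2$, and then $\int_0^T|\rho-\bar\rho|_\infty\,dt\le C_s\sqrt{T}\,\big(\int_0^T\|w^*\|_s^2\,dt\big)^{1/2}\le c(M)\varepsilon^2$, which is (\ref{L1_Hs_pressure}).

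The genuinely delicate point is the $\varepsilon$-uniformity of this continuation argument: $M$ and $T^*$ must be chosen before $T^\varepsilon$ is known, and the self-improving loop (energy of size $O(\varepsilon^2)$ $\Rightarrow$ strict $L^\infty$ bounds $\Rightarrow$ controlled constants $\Rightarrow$ energy of size $O(\varepsilon^2)$) must not degenerate as $\varepsilon\to0$. This succeeds precisely because the singular relaxation terms enter (\ref{s_integral_estimate_KRM_paper}) only through the favorably signed dissipation and never on the right-hand side; the quadratic-pressure modification, by contrast, is routine given Remark \ref{Remark_previous_setting}.
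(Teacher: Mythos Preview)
Your proposal is correct and follows essentially the same route as the paper: reduce (\ref{s_integral_estimate_KRM_paper})--(\ref{s_1_time_derivative_estimate}) to the symmetrized $H^s$ estimates of \cite{Bianchini3} (with the quadratic-pressure modification absorbed as in Remark~\ref{Remark_previous_setting}), apply Gronwall, and then deduce (\ref{infty_original_uniform_bounds_KRM_paper})--(\ref{L1_Hs_pressure}) from Sobolev embedding and a continuation argument. The only cosmetic difference is that the paper defines $T^*$ as the exit time in (\ref{T_star_def}) and argues it is bounded below uniformly in $\varepsilon$, whereas you fix $T^*$ a priori via $e^{\Lambda T^*}\le 2$ and bootstrap; these are equivalent formulations of the same argument.
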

\begin{proof}
We discuss each result separately.
\begin{itemize}
\item Estimate (\ref{s_integral_estimate_KRM_paper}) follows from [\cite{Bianchini3}, Lemma 4.2], the change of variables [\cite{Bianchini3}, (30)] and the Sobolev embedding theorem. Notice that the dependency of $c(|\rho|_{L_t^\infty L_x^\infty}, \cdot)$ on $|\rho|_{L_t^\infty L_x^\infty}$ is a consequence of the quadratic pressure in (\ref{Pressure_approximation_BGK}), see Remark \ref{Remark_previous_setting}, and the estimates of the nonlinear term in [\cite{Bianchini3}, Lemma 4.2].

\item By applying Gronwall's inequality to (\ref{s_integral_estimate_KRM_paper}), one gets (\ref{s_energy_estimate}).

\item Estimate (\ref{s_1_time_derivative_estimate}) follows from [\cite{Bianchini3}, Proposition 3 and (30)]. 

\item For a fixed constant $M>M_0:=\bar{\rho}\|\textbf{u}_0\|_{s+1}$, let us define 
\begin{equation}
\label{T_star_def}
\begin{aligned}
T^*: =\sup_{t \in [0, T^\varepsilon)} &  \Bigg\{ \frac{|\rho(t)-\bar{\rho}|_\infty}{\varepsilon} + |\rho \textbf{u}(t)|_\infty \le M\Bigg\}.
\end{aligned}
\end{equation}

The Sobolev embedding theorem applied to (\ref{s_energy_estimate}) yields, thanks to the definition of $w^*$ in (\ref{translated_variables}), 
\begin{equation}
\frac{|\rho(t)-\bar{\rho}|_\infty}{\varepsilon}+|\rho \textbf{u}(t)|_\infty \le cM_0  e^{c(|\rho|_{L_t^\infty L_x^\infty}, \; |\textbf{u}|_{L_t^\infty L_x^\infty})t}, \qquad t \le T^*.
\end{equation}
The uniform bounds (\ref{infty_original_uniform_bounds_KRM_paper})-(\ref{infty_uniform_bounds_KRM_paper}) are due to the Sobolev embedding theorem applied to (\ref{s_energy_estimate}) and the definition of $T^*$, which depends on $M_0, \, M$. 

\item The last uniform bound is a consequence of the Sobolev embedding theorem applied to (\ref{s_integral_estimate_KRM_paper}), the previous bounds in (\ref{infty_original_uniform_bounds_KRM_paper})-(\ref{infty_uniform_bounds_KRM_paper}), and the definition of $w^*$ in (\ref{translated_variables}).
\end{itemize}
\end{proof}
\subsection{Kinetic entropies and the relative entropy}
Here we recall the definition and the conditions that assure the existence of a kinetic entropy for a discrete velocities BGK model, see \cite{Bouchut} for a detailed discussion.\\
Let $\mathcal{E}$ be a non-empty set of convex entropies for a given limit system. Assume also that $\mathcal{E}$ is separable. A general BGK model under the diffusive scaling reads as follows
\begin{equation}
\label{BGK_general}
\partial_t f_i + \frac{\lambda_i}{\varepsilon}  \cdot \nabla_x f_i = \frac{1}{\varepsilon^2}(M_i(\textbf{u}^\varepsilon)-f_i), \qquad i=1, \cdots, L,
\end{equation}
where $L \ge d$, for $ i=1, \cdots, L,$
\begin{align*}
& f_i(t, x)=(f_i^1, \cdots, f_i^N): \mathbb{R}\times
\mathbb{R}^d \rightarrow \mathbb{R}^N, \\
& \lambda_i=(\lambda_i^1, \cdots, \lambda_i^d), \\
& M_i(\textbf{u}^\varepsilon)=(M_i^1, \cdots, M_i^N): \mathbb{R}^N \rightarrow \mathbb{R}^N.
\end{align*}
and
$\textbf{u}^\varepsilon=\sum_{i=1}^L f_i$ is the approximating vector field, converging to the solution to the limit system, which is established under some consistency conditions, see \cite{Bouchut, CN, Aregba1, Aregba2, BGN} for a detailed discussion. An important feature of these approximations is the existence, under some reasonable conditions, of a kinetic entropy. Set $\mathcal{D}_i:=\{M_i(\textbf{u}): \, \textbf{u} \in \mathcal{U}\}.$
\begin{definition}
\label{kinetic_entropy_general}
A kinetic entropy for system (\ref{BGK_general}) is a convex function $\mathcal{H}(\textbf{f})=\sum_{i=1}^L \mathcal{H}_i(f_i)$, with $\mathcal{H}_i: \mathcal{D}_i \rightarrow \mathbb{R}$, such that, for $\eta(\textbf{u}) \in \mathcal{E}$,
\begin{itemize}
\item (E1) $\mathcal{H}(M(\textbf{u}))=\eta(\textbf{u})$ for every $\textbf{u}\in \mathcal{U}$,
\item (E2) $\mathcal{H}(M(\textbf{u}_f)) \le \mathcal{H}(\textbf{f}),$ where $\textbf{u}_f:=\sum_{i=1}^L f_i \in \mathcal{U}, \, f_i \in \mathcal{D}_i$.
\end{itemize}
\end{definition}
Such a property provides an energy inequality which gives robustness for the scheme, and this is the main advantage of these models with respect to another class of discrete velocities BGK models used in computational physiscs, the Lattice Boltzmann schemes, see \cite{Succi, Wolf}. Indeed, it is easy to see that, multiplying the BGK system (\ref{BGK_general}) by $\nabla_f \mathcal{H}(\textbf{f})$, the minimality (E2)
together with the convexity property, provide the following entropy inequality
\begin{equation}
\label{1entropy inequality}
\partial_t \mathcal{H}(\textbf{f})+{\Lambda} \cdot \nabla_x \mathcal{H}(\textbf{f})=\dfrac{1}{\varepsilon} \nabla_\textbf{f} \mathcal{H}(\textbf{f}) \cdot (M(\textbf{u})-\textbf{f})  \le 0,
\end{equation}
which means that, according with the definition given in \cite{HN}, the kinetic entropy $\mathcal{H}(\textbf{f})$ is dissipative.
More precisely, properties (E1)-(E2) under the hypothesis of  [\cite{Bouchut}, Thm. 2.1] assure that, for any $\eta(\textbf{u}) \in \mathcal{E}$, defining the projector $\mathcal{P}$ such that
\begin{equation}
\label{Projector}
\mathcal{P}\textbf{f}=\sum_{i=1}^L=\textbf{u},
\end{equation}
then
\begin{equation}
\label{minimu_H_precise}
\eta(\textbf{u})=\min_{\mathcal{P}\textbf{f}=\textbf{u}} \mathcal{H}(\textbf{f})=\mathcal{H}(M(\textbf{u})).
\end{equation}
In this context, the Gibbs principle for relaxation and, in particular,  [\cite{Tzavaras}, Prop. 2.1], imply that
\begin{equation}
\label{orthogonality_Tzavaras}
\nabla_{\textbf{f}} \mathcal{H}(M(\textbf{u})) \perp Ker(\mathcal{P}).
\end{equation}
Since $\textbf{f}-{M}(\textbf{u}) \in Ker(\mathcal{P})$, the convexity property of $\mathcal{H}(\textbf{f})$ together with condition (\ref{orthogonality_Tzavaras}) allow us to get the following inequality:
\begin{equation}
\label{dissipative_entropy}
\nabla_\textbf{f} \mathcal{H}(\textbf{f}) \cdot (\textbf{f}-M(\textbf{u})) \le -c |\textbf{f}-M(\textbf{u})|^2, \qquad c=c(|\textbf{f}|_\infty),
\end{equation}
meaning that the kinetic entropy $\mathcal{H}(M(\textbf{u}))$ is strictly dissipative, according to the definition given in \cite{HN}. This dissipative property is the main ingredient to apply the relative entropy method, which provides a uniform bound for the relative entropy. Roughly speaking, the relative entropy can be seen as a perturbation of the kinetic entropy near to the equilibrium represented by the solution to the limit system. A precise definition in the context of hyperbolic relaxation is provided in \cite{Tzavaras}. For diffusive relaxation, we will use the following 
\begin{equation}
\label{relative_entropy_def}
\begin{aligned}
\tilde{\mathcal{H}}(\textbf{f}|\bar{\textbf{f}})&=\mathcal{H}(\textbf{f})-\mathcal{H}(\overline{\mathcal{M}}(\bar{w}))-\nabla_\textbf{f}\mathcal{H}(\overline{\mathcal{M}}(\bar{w}))\cdot (\textbf{f}-\overline{\mathcal{M}}(\bar{w}))\\
&=\sum_i \mathcal{H}_i(f_i) - \mathcal{H}_i(\overline{\mathcal{M}}_i(\bar{w}))-\nabla_{f_i} \mathcal{H}_i(\overline{\mathcal{M}}_i(\bar{w})) \cdot (f_i-\overline{\mathcal{M}}_i(\bar{w})),
\end{aligned}
\end{equation}
where $\mathcal{H}(\textbf{f})$ is in Definition \ref{kinetic_entropy_general}, and $\overline{\mathcal{M}}(\bar{w})=(\overline{\mathcal{M}}_i(\bar{w}))_{i=1, \cdots, 5}$ are the perturbed Maxwellians in (\ref{Perturbed_Maxwellians}) evaluated in the solution $\bar{w}=(\bar{\rho}, \varepsilon \bar{\rho}\bar{\textbf{u}})$ to the incompressible Navier-Stokes equations (\ref{real_NS}).
\subsection{Quantifying the dissipation}
The aim of this part is to characterize and to quantify the dissipative terms resulting from the relative entropy estimate.
Hereafter, we will drop the apex $\varepsilon$ when there is no ambiguity. We start with two preliminary lemmas.
\begin{lemma}
\label{lemma_derivatives-BGK}
Let $\eta(w)$ be defined in (\ref{entropy_isentropic_Euler}). Let 
$\mathcal{H}(\textbf{f})=\sum_{i=1}^5 \mathcal{H}_i(f_i)$
be a kinetic entropy associated with the vector-BGK model in (\ref{BGK_NS}), such that $\mathcal{H}(M(w))=\eta(w)$.
Then the following entropy expansion is satisfied:
\begin{equation*}
\begin{aligned}
\frac{1}{\varepsilon^2} \int_0^T \iint & \sum_{i=1}^5 \nabla_{f_i} \mathcal{H}_i(f_i) \cdot  (M_i-f_i) \; dt \, dx \, dy\\
&=-\int_0^T \iint \Bigg[ \frac{\nabla^2_{w}\mathcal{\eta}(w)}{2a \lambda^2 \tau}\cdot (m-\frac{A_1(w)}{\varepsilon})\Bigg]\cdot (m-\frac{A_1(w)}{\varepsilon})\; dt \, dx \, dy\\
&-\int_0^T \iint \Bigg[ \frac{\nabla^2_{w}\mathcal{\eta}(w)}{2a \lambda^2 \tau}\cdot (\xi-\frac{A_2(w)}{\varepsilon})\Bigg] \cdot (\xi-\frac{A_2(w)}{\varepsilon})\; dt \, dx \, dy\\
& -\int_0^T \int \int \Bigg[ \frac{\nabla^2_{w}\mathcal{\eta}(w)}{2a \varepsilon^2 \tau}\cdot (k-2aw) \Bigg] \cdot (k-2aw) \; dt \, dx \, dy\\
& -\int_0^T \iint  \Bigg[ \frac{\nabla^2_{w}\mathcal{\eta}(w)}{2a \varepsilon^2 \tau}\cdot (h-2aw) \Bigg] \cdot (h-2aw) \; dt \, dx \, dy\\
&-\int_0^T \iint  \Bigg[\frac{\nabla^2_w \eta(w)}{(1-4a)\tau \varepsilon^2}\cdot (4aw-(k+h))\Bigg] \cdot (4aw-(k+h)) \; dt \, dx \, dy \\
&+ O(\varepsilon^3).
\end{aligned}
\end{equation*}
\end{lemma}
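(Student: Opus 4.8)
The plan is to Taylor expand each kinetic sub-entropy $\mathcal{H}_i$ around the relevant Maxwellian and to systematically track which terms survive at order $\varepsilon^2$ (since the whole expression carries a prefactor $1/\varepsilon^2$). The starting observation is that, by property (E1), $\mathcal{H}_i(M_i(w))$ summed over $i$ equals $\eta(w)$, and that (E2) forces $\nabla_{f_i}\mathcal{H}_i$ evaluated at the equilibrium to be, up to the projector, the common gradient $\nabla_w \eta(w)$; more precisely the Gibbs/Lagrange structure (\ref{orthogonality_Tzavaras}) says $\nabla_{f_i}\mathcal{H}_i(M_i(w)) = \nabla_w\eta(w)$ for every $i$, since $M(w)$ realizes the minimum of $\mathcal{H}$ over the fibre $\mathcal{P}\textbf{f}=w$. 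Differentiating the identity $\sum_i \mathcal{H}_i(M_i(w)) = \eta(w)$ once more in $w$ and using the chain rule together with $\sum_i \partial_w M_i = \mathrm{Id}$, one also gets a relation among the Hessians $\nabla^2_{f_i}\mathcal{H}_i(M_i(w))$ contracted with $\partial_w M_i$; this is what will convert the abstract $\nabla^2_{f_i}\mathcal{H}_i$ into the explicit $\nabla^2_w\eta(w)$ appearing on the right-hand side.

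Next I would write, for each $i$, the second-order Taylor expansion
$\nabla_{f_i}\mathcal{H}_i(f_i)\cdot(M_i - f_i) = -\,(M_i-f_i)\cdot \nabla^2_{f_i}\mathcal{H}_i(\zeta_i)\cdot(M_i-f_i)$ for some intermediate point $\zeta_i$ (exact, by the mean value form), or equivalently expand around $M_i(w)$ and pay a cubic remainder. The key input is the size of $M_i - f_i$ in each of the five equations, read off from the relaxation formulation (\ref{BGK_NS_new_variables}) and the limit relations (\ref{limit_solution_relaxation_variables})–(\ref{Perturbed_Maxwellians}): in the first four blocks $M_i - f_i$ is governed by $m - A_1(w)/\varepsilon$, $\xi - A_2(w)/\varepsilon$ (which are $O(1)$, being $-\tau\lambda^2\partial_x k + O(\varepsilon^2)$ etc.), while $k - 2aw$, $h-2aw$ are $O(\varepsilon^2)$, and $4aw - (k+h)$ is likewise controlled. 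Substituting the change of variables (\ref{variables_BGK}) relating $(f_1,f_3)$ to $(k,m)$, $(f_2,f_4)$ to $(h,\xi)$, and $f_5$ to $w,k,h$, and collecting the quadratic forms, the Hessian of each $\mathcal{H}_i$ at the equilibrium gets replaced by $\nabla^2_w\eta(w)$ divided by the appropriate combination of $a,\lambda,\tau,\varepsilon$ dictated by the Jacobians $\partial_w M_i$ — this is exactly how the denominators $2a\lambda^2\tau$, $2a\varepsilon^2\tau$, and $(1-4a)\tau\varepsilon^2$ arise (note $\partial_w M_5 = (1-4a)\mathrm{Id}$, giving the last line). The difference between evaluating the Hessian at the intermediate point $\zeta_i$ versus at $M_i(w)$, and the cubic remainders, are each $O(\varepsilon)$ smaller than the leading quadratic terms which themselves integrate (after the $1/\varepsilon^2$ prefactor and using the $L^1_t$ smallness in (\ref{L1_Hs_pressure}) and the uniform bounds of Lemma \ref{lemma_energy_estimates_KRM_paper}) to $O(\varepsilon)$ contributions — hence the global $O(\varepsilon^3)$ error. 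I would also use that $f_i = \mathcal{M}_i(w) + O(\text{relaxation defect})$ and that $\mathcal{M}_i(w) = M_i(w) + O(\varepsilon)$, so that replacing $w$-dependent coefficients evaluated at $f_i$-arguments by ones evaluated at $w$ is legitimate modulo the same error budget.

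The main obstacle is precisely that the kinetic entropy $\mathcal{H}$ is only known to exist (via the implicit/inverse function theorem in \cite{Bouchut}) with no explicit dependence on $\varepsilon$, so all manipulations must go through the structural identities (E1), (E2), (\ref{minimu_H_precise}), (\ref{orthogonality_Tzavaras}) and their first and second differentials, rather than through any closed form. Concretely, the delicate point is justifying that the second derivative identities obtained by differentiating $\mathcal{H}(M(w)) = \eta(w)$ twice are enough to pin down the quadratic forms to leading order — one must be careful that cross terms between different blocks (coming from the off-diagonal structure of the change of variables, e.g. $f_5$ depending on $w$, $k$, and $h$ simultaneously) either cancel by the orthogonality (\ref{orthogonality_Tzavaras}) or are absorbed into the remainder. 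Controlling the intermediate-point Hessians $\nabla^2_{f_i}\mathcal{H}_i(\zeta_i)$ uniformly requires staying in the bounded set of densities guaranteed by (\ref{infty_uniform_bounds_KRM_paper}), which is exactly the local-in-time setting of \cite{Bouchut}; this is where Lemma \ref{lemma_energy_estimates_KRM_paper} is used, and where the restriction $t\in[0,T^*]$ enters. Once the quadratic forms are identified and the remainders are bounded by $O(\varepsilon^3)$ using (\ref{s_integral_estimate_KRM_paper}), (\ref{s_1_time_derivative_estimate}) and (\ref{L1_Hs_pressure}), the claimed expansion follows by summing the five contributions.
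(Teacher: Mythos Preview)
Your overall strategy matches the paper's: Taylor expand each $\nabla_{f_i}\mathcal{H}_i(f_i)\cdot(M_i-f_i)$ around $M_i(w)$, kill the first-order term via the orthogonality $\nabla_{f_i}\mathcal{H}_i(M_i(w))=\nabla_w\eta(w)$, reduce the quadratic form to one governed by $\nabla^2_w\eta(w)$, pass to the relaxation variables $(m,\xi,k,h)$, and absorb cubic remainders using the uniform bounds of Lemma~\ref{lemma_energy_estimates_KRM_paper}. You also correctly flag that the whole argument lives inside the bounded-density regime where Bouchut's construction applies.

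The one genuine gap is in how you propose to obtain $\nabla^2_{f_i}\mathcal{H}_i(M_i(w))\approx \tfrac{1}{a}\nabla^2_w\eta(w)$ for $i=1,\dots,4$. You suggest this comes from ``the Jacobians $\partial_w M_i$'', i.e.\ differentiating $\nabla_{f_i}\mathcal{H}_i(M_i(w))=\nabla_w\eta(w)$ in $w$ and inverting $DM_i=a\mathrm{Id}\pm\tfrac{1}{2\lambda}DA_j$. But $DA_j$ has $O(1)$ entries (e.g.\ $\partial_{q_1}A_1^{(1)}=1$), so $(DM_i)^{-1}$ is \emph{not} $\tfrac{1}{a}\mathrm{Id}+O(\varepsilon)$; the Jacobian route alone does not produce the claimed error. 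The paper's mechanism is different and specific to the diffusive scaling: since $w=(\rho,\varepsilon\rho\textbf{u})$ and $|\rho-\bar\rho|_\infty=O(\varepsilon)$, one has $A_j(w)=O(\varepsilon)$ as a \emph{value}, hence $M_i(w)=aw+O(\varepsilon)$ pointwise. This lets one write $\nabla_{f_i}\mathcal{H}_i(aw)=\nabla_w\eta(w)+O(\varepsilon)$ and then differentiate to obtain $\nabla^2_{f_i}\mathcal{H}_i(aw)=\tfrac{1}{a}\nabla^2_w\eta(w)+O(\varepsilon)$, which is the crucial identity~(\ref{real_entropy_expansion}). For $i=5$ your Jacobian observation is correct and exact, since $DM_5=(1-4a)\mathrm{Id}$.

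A second point worth tightening: the paper controls the remainder not via the abstract sizes of $m-A_1/\varepsilon$ etc., but by rewriting $(M_i-f_i)/\varepsilon^2$ directly through the equations (see the identities expressing it as combinations of $\partial_t k+\partial_x m$ and $\varepsilon^2\partial_t m+\lambda^2\partial_x k$), and then invoking the $H^s$ bounds on space and time derivatives from Lemma~\ref{lemma_energy_estimates_KRM_paper} to conclude $|f_i-M_i|^2/\varepsilon=O(\varepsilon^3)$. Your remark that ``$m-A_1(w)/\varepsilon$ are $O(1)$'' is too pessimistic for this purpose; in fact they are $O(\varepsilon)$ under the uniform bounds, and this finer control is what makes the cubic remainder harmless after division by $\varepsilon^2$.
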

\begin{proof}
First of all, the uniform bounds (\ref{infty_uniform_bounds_KRM_paper}) and [\cite{Bouchut}, Theorem 2.1] provide the existence of a kinetic entropy for (\ref{BGK_NS}), such that
$$\mathcal{H}({M}(w))=\eta(w) \; \text{in } (\ref{entropy_isentropic_Euler}), \quad \nabla_{f_i}\mathcal{H}_i(M_i(w))=\nabla_{w}\eta(w), \quad i=1, \cdots, 5.$$
We point out that the spectrum of the Jacobian matrices of the Maxwellians in (\ref{Maxwellians}) is positive provided that the parameter $a$ in the expressions (\ref{Maxwellians}) is positive and $\lambda>0$ is big enough (Assumptions \ref{assumptions_a}). This remark, together with the bounds in (\ref{infty_uniform_bounds_KRM_paper}), assure the existence of a kinetic convex and dissipative entropy for our system, thanks to [\cite{Bouchut}, Theorem 2.1]. 
Notice that in the course of our computations, the densities $\textbf{f}^{\, \varepsilon}$ remain in a bounded set, close enough to the hyperbolic equilibrium.\\
Now we consider the following expansion
\begin{equation}
\label{entropy_expansion}
\begin{aligned}
\frac{1}{\varepsilon^2}\sum_{i=1}^5 & \nabla_{f_i} \mathcal{H}_i(f_i)  \cdot (M_i-f_i)\\
&=\frac{1}{\varepsilon^2}\sum_{i=1}^5 \nabla_{f_i} \mathcal{H}_i(M_i)\cdot (f_i-M_i) + \frac{1}{\varepsilon^2}\sum_{i=1}^5 \nabla^2_{f_i} \mathcal{H}_i(M_i) \cdot (f_i-M_i)\cdot (f_i-M_i)\\
& + O(\frac{|f_i-M_i|^3}{\varepsilon^2})\\
&=-\frac{1}{\varepsilon^2}\sum_{i=1}^5 \nabla^2_{f_i} \mathcal{H}_i(M_i) \cdot (f_i-M_i) \cdot (f_i-M_i)\\
&+ O(\frac{|f_i-M_i|^3}{\varepsilon^2}).
\end{aligned}
\end{equation}
where the first term vanishes thanks to the orthogonality property [\cite{Tzavaras}, Proposition 2.1].
For $i=1, \cdots, 4$, the first term of the last equality reads
\begin{align*}
&-\frac{1}{\varepsilon^2}\int_0^T \iint \nabla^2_{f_i}\mathcal{H}_i(M_i) \cdot (f_i-M_i) \cdot (f_i-M_i) \; dt \, dx \, dy\\
&=-\frac{1}{\varepsilon^2}\int_0^T \iint \nabla^2  \mathcal{H}_i(aw\pm \frac{A_i(w)}{2\lambda}) \cdot (f_i-M_i) \cdot (f_i-M_i)\; dt \, dx \, dy.
\end{align*}
Note that, from (\ref{w_def})-(\ref{Fluxes_BGK}) and Lemma \ref{lemma_energy_estimates_KRM_paper},
$$w=\left(\begin{array}{c}
\rho\\
\varepsilon \rho u_1\\
\varepsilon \rho u_2
\end{array}\right)
=\left(\begin{array}{c}
O(1)\\
O(\varepsilon)\\
O(\varepsilon)
\end{array}\right), 
$$
\begin{align*}
A_{1}(w)=\left(\begin{array}{c}
\varepsilon \rho u_1\\
\varepsilon^2 \rho u_1^2+\frac{\rho^2 - \bar{\rho}^2}{2\bar{\rho}}\\
\varepsilon^2 \rho u_1 u_2
\end{array}\right)
=\left(\begin{array}{c}
O(\varepsilon)\\
O(\varepsilon^2)\\
O(\varepsilon^2)
\end{array}\right),
\end{align*}
\begin{align*}
A_{2}(w)=\left(\begin{array}{c}
\varepsilon \rho u_1\\
\varepsilon^2 \rho u_1 u_2\\
\varepsilon^2 \rho u_2^2+\frac{\rho^2 - \bar{\rho}^2}{2\bar{\rho}}\\
\end{array}\right)
=\left(\begin{array}{c}
O(\varepsilon)\\
O(\varepsilon^2)\\
O(\varepsilon^2)
\end{array}\right).
\end{align*}
This way,
\begin{align*}
\nabla_{f_i} \mathcal{H}(M_i(w))=\nabla_{f_i} \mathcal{H}\Bigg(aw\pm \frac{A_i(w)}{2\lambda}\Bigg)=\nabla_{f_i}\mathcal{H}(aw)+O(\varepsilon).
\end{align*}
Moreover, from \cite{Bouchut}, it is also known that
$$\nabla_{f_i} \mathcal{H}(M_i(w))=\nabla_w \eta(w).$$
Differentiating again the previous equivalent expressions, 
\begin{equation}
\label{real_entropy_expansion}
\nabla^2_{f_i}   \mathcal{H}_i(aw)=\frac{1}{a}\nabla^2_w \eta(w) + O(\varepsilon).
\end{equation}
Thus, the last equality yields
\begin{align*}
& -\frac{1}{\varepsilon^2}\int_0^T \iint \nabla^2_{f_i}   \mathcal{H}_i(aw \pm \frac{A_i(w)}{2 \lambda}) \cdot (f_i-M_i)\cdot (f_i-M_i)\; dt \, dx \, dy \\
& \le -\frac{1}{\varepsilon^2}\int_0^T \iint \frac{1}{a} \nabla^2_w \eta(w) \cdot (f_i-M_i)\cdot (f_i-M_i) \; dt \, dx \, dy \\
&+\frac{c(|w|_{L_t^\infty L_x^\infty})}{\varepsilon} |f_i-M_i|^2_{L_t^\infty L_x^\infty}.
\end{align*}

Now, from  (\ref{BGK_NS})-(\ref{variables_BGK}),
\begin{equation}
\label{estimates_lemma_BGK}
\begin{aligned}
& \frac{M_1-f_1}{\varepsilon^2}=\partial_t f_1 + \frac{\lambda}{\varepsilon} \partial_x f_1=\frac{1}{2}(\partial_t k + \partial_x m) + \frac{1}{2 \lambda \varepsilon} (\varepsilon^2 \partial_t m + \lambda^2 \partial_x k), \\
&  \frac{M_3-f_3}{\varepsilon^2}=\partial_t f_3 - \frac{\lambda}{\varepsilon} \partial_x f_3=\frac{1}{2}(\partial_t k + \partial_x m) - \frac{1}{2 \lambda \varepsilon} (\varepsilon^2 \partial_t m + \lambda^2 \partial_x k), \\
& \frac{M_2-f_2}{\varepsilon^2}=\partial_t f_2 + \frac{\lambda}{\varepsilon} \partial_y f_2 = \frac{1}{2} (\partial_t h + \partial_y \xi) + \frac{1}{2 \lambda \varepsilon} (\varepsilon^2 \partial_t \xi + \lambda^2 \partial_y h), \\
& \frac{M_4-f_4}{\varepsilon^2}=\partial_t f_4 - \frac{\lambda}{\varepsilon} \partial_y f_4 = \frac{1}{2} (\partial_t h + \partial_y \xi) - \frac{1}{2 \lambda \varepsilon} (\varepsilon^2 \partial_t \xi + \lambda^2 \partial_y h).
\end{aligned}
\end{equation}
Lemma \ref{lemma_energy_estimates_KRM_paper} and the previous equalities imply that
$$\frac{c(|w|_{L_t^\infty L_x^\infty})}{\varepsilon} |f_i-M_i|^2_{L_t^\infty L_x^\infty}=O(\varepsilon^3),$$
and so, by using the change of variables (\ref{variables_BGK}),
\begin{align*}
&-\frac{1}{\varepsilon^2} \int_0^T \iint \sum_{i=1}^4 \nabla_{f_i} \mathcal{H}_i(f_i) \cdot  (M_i-f_i) \; dt \, dx \, dy\\
&=-\frac{1}{\varepsilon^2} \int_0^T \iint \sum_{i=1}^4 \frac{1}{a} \nabla^2_w \eta(w) \cdot  (M_i-f_i) \cdot  (M_i-f_i) \; dt \, dx \, dy+O(\varepsilon^3)\\
&=\int_0^T \iint \Bigg[ \frac{\nabla^2_{w}\mathcal{\eta}(w)}{2a \lambda^2 \tau}\cdot (m-\frac{A_1(w)}{\varepsilon})\Bigg]\cdot (m-\frac{A_1(w)}{\varepsilon})\; dt \, dx \, dy\\
&-\int_0^T \iint \Bigg[ \frac{\nabla^2_{w}\mathcal{\eta}(w)}{2a \lambda^2 \tau}\cdot (\xi-\frac{A_2(w)}{\varepsilon})\Bigg] \cdot (\xi-\frac{A_2(w)}{\varepsilon})\; dt \, dx \, dy\\
& -\int_0^T \int \int \Bigg[ \frac{\nabla^2_{w}\mathcal{\eta}(w)}{2a \varepsilon^2 \tau}\cdot (k-2aw) \Bigg] \cdot (k-2aw) \; dt \, dx \, dy\\
& -\int_0^T \iint  \Bigg[ \frac{\nabla^2_{w}\mathcal{\eta}(w)}{2a \varepsilon^2 \tau}\cdot (h-2aw) \Bigg] \cdot (h-2aw) \; dt \, dx \, dy+O(\varepsilon^3).
\end{align*}
The expansion
\begin{align*}
& -\frac{1}{\varepsilon^2} \int_0^T \iint \mathcal{H}_i(f_5) \cdot  (M_5-f_5) \; dt \, dx \, dy \\
&=-\int_0^T \iint  \Bigg[\frac{\nabla^2_w \eta(w)}{(1-4a)\tau \varepsilon^2}\cdot (4aw-(k+h))\Bigg] \cdot (4aw-(k+h)) \; dt \, dx \, dy + O(\varepsilon^3)
\end{align*}
is obtained in analogous way.
\end{proof}

\begin{lemma}
\label{corollary_perturbed_Maxwellians}
Consider the limit solution $\overline{\mathcal{M}}_i$, for $i=1, \cdots, 4$, in (\ref{Perturbed_Maxwellians}). Then
\begin{equation}
\label{Perturbed_Maxwellians_expansion}
\begin{aligned}
\nabla_{f_i} \overline{\mathcal{H}}_i(\overline{\mathcal{M}}_i)&=\nabla_{f_i} \mathcal{H}_i(\overline{{M}}_i) \mp a \varepsilon \lambda \tau \nabla^2_{f_i} \mathcal{H}_i(\overline{M}_i)\partial_{x_j} \bar{w}+O(\varepsilon^3)\\
&=\nabla_w \eta(\bar{w})\mp \lambda \varepsilon \tau \nabla^2_w\eta(\bar{w})\partial_{x_j}\bar{w}+O(\varepsilon^3), \quad j=1, 2.
\end{aligned}
\end{equation}
\end{lemma}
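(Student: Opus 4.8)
The plan is to derive (\ref{Perturbed_Maxwellians_expansion}) by a second-order Taylor expansion of $\nabla_{f_i}\mathcal{H}_i$ about the unperturbed Maxwellian $\overline{M}_i:=M_i(\bar w)$, and then to convert the kinetic-entropy derivatives into derivatives of $\eta$ exactly as in the proof of Lemma~\ref{lemma_derivatives-BGK} (here $\overline{\mathcal{H}}_i$ is read as the $i$-th component $\mathcal{H}_i$ of that kinetic entropy). First I would record from (\ref{Perturbed_Maxwellians}) that, for $i=1,\dots,4$,
\begin{equation*}
\overline{\mathcal{M}}_i-\overline{M}_i=\mp\,a\varepsilon\lambda\tau\,\partial_{x_j}\bar w,
\end{equation*}
with the upper sign and $j=1$ for $i=1$, the upper sign and $j=2$ for $i=2$, the lower sign and $j=1$ for $i=3$, the lower sign and $j=2$ for $i=4$. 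Since $\bar w=(\bar\rho,\varepsilon\bar\rho\bar{\textbf{u}})$ with $\bar\rho$ a constant and $\bar{\textbf{u}}$ a smooth solution of (\ref{real_NS}), one has $\partial_{x_j}\bar w=(0,\varepsilon\bar\rho\,\partial_{x_j}\bar{\textbf{u}})=O(\varepsilon)$, hence $\overline{\mathcal{M}}_i-\overline{M}_i=O(\varepsilon^2)$. I would also note that, by Lemma~\ref{lemma_energy_estimates_KRM_paper} and [\cite{Bouchut}, Theorem~2.1], for $t\in[0,T^*]$ the states $\overline{\mathcal{M}}_i$ and $\overline{M}_i$ stay in a fixed compact neighbourhood of the hyperbolic equilibrium, on which $\mathcal{H}_i$ is $C^3$ with derivatives bounded uniformly in $\varepsilon$; this qualitative regularity — which is all one gets, $\mathcal{H}_i$ being defined only implicitly via the inverse function theorem — is what makes the $O(\cdot)$ below uniform.

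Then I would Taylor expand $\nabla_{f_i}\mathcal{H}_i$ at $\overline{M}_i$,
\begin{equation*}
\nabla_{f_i}\mathcal{H}_i(\overline{\mathcal{M}}_i)=\nabla_{f_i}\mathcal{H}_i(\overline{M}_i)+\nabla^2_{f_i}\mathcal{H}_i(\overline{M}_i)\cdot(\overline{\mathcal{M}}_i-\overline{M}_i)+O\big(|\overline{\mathcal{M}}_i-\overline{M}_i|^2\big),
\end{equation*}
substitute $\overline{\mathcal{M}}_i-\overline{M}_i=\mp a\varepsilon\lambda\tau\,\partial_{x_j}\bar w$, and absorb $|\overline{\mathcal{M}}_i-\overline{M}_i|^2=O(\varepsilon^4)$ into $O(\varepsilon^3)$; this gives the first equality of (\ref{Perturbed_Maxwellians_expansion}). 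For the second equality I would reuse the two identities already exploited in the proof of Lemma~\ref{lemma_derivatives-BGK}: the exact relation $\nabla_{f_i}\mathcal{H}_i(M_i(w))=\nabla_w\eta(w)$ from \cite{Bouchut}, and — since $M_i(w)=aw\pm A_i(w)/(2\lambda)$ with $A_i(\bar w)=O(\varepsilon)$ by (\ref{Fluxes_BGK}), (\ref{Pressure_approximation_BGK}) and Lemma~\ref{lemma_energy_estimates_KRM_paper} — the expansion (\ref{real_entropy_expansion}) at $\bar w$, i.e.\ $\nabla^2_{f_i}\mathcal{H}_i(\overline{M}_i)=\tfrac1a\nabla^2_w\eta(\bar w)+O(\varepsilon)$. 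Evaluating at $\bar w$ gives $\nabla_{f_i}\mathcal{H}_i(\overline{M}_i)=\nabla_w\eta(\bar w)$, while the $O(\varepsilon)$ correction in $\nabla^2_{f_i}\mathcal{H}_i(\overline{M}_i)$, multiplied by $a\varepsilon\lambda\tau\,\partial_{x_j}\bar w=O(\varepsilon^2)$, contributes only $O(\varepsilon^3)$; hence
\begin{equation*}
\nabla_{f_i}\mathcal{H}_i(\overline{\mathcal{M}}_i)=\nabla_w\eta(\bar w)\mp\varepsilon\lambda\tau\,\nabla^2_w\eta(\bar w)\,\partial_{x_j}\bar w+O(\varepsilon^3),
\end{equation*}
as claimed.

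The computation is a routine two-term Taylor expansion, so the point that requires care — and the one I would flag as the main obstacle — is making every remainder genuinely uniform in $\varepsilon$ on $[0,T^*]$. Because $\mathcal{H}_i$ is characterized only implicitly by [\cite{Bouchut}, Theorem~2.1], there is no explicit formula to differentiate; one must instead argue that the compact set containing $\overline{\mathcal{M}}_i$ and $\overline{M}_i$, and hence the bounds on $\nabla^2\mathcal{H}_i$ and $\nabla^3\mathcal{H}_i$ entering the $O(\cdot)$'s, can be chosen independently of $\varepsilon$. This is precisely what the uniform bounds (\ref{infty_uniform_bounds_KRM_paper}) of Lemma~\ref{lemma_energy_estimates_KRM_paper} provide, and it is also what legitimizes the replacement $M_i(w)=aw+O(\varepsilon)$ underlying (\ref{real_entropy_expansion}).
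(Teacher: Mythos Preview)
Your proof is correct and follows exactly the approach the paper indicates: a Taylor expansion of $\nabla_{f_i}\mathcal{H}_i$ about $\overline{M}_i$, combined with the identity $\nabla_{f_i}\mathcal{H}_i(M_i(w))=\nabla_w\eta(w)$ and the expansion (\ref{real_entropy_expansion}) from Lemma~\ref{lemma_derivatives-BGK}. Your additional care in justifying the $\varepsilon$-uniformity of the remainders via the bounds of Lemma~\ref{lemma_energy_estimates_KRM_paper} is appropriate and in fact makes explicit what the paper only gestures at.
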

\begin{proof}
The proof follows by Taylor expansions and (\ref{real_entropy_expansion}), in the spirit of Lemma \ref{lemma_energy_estimates_KRM_paper}.
\end{proof}

\section{Relative entropy estimate for the vector-BGK model}
\label{REsection}
Our main result is stated here.
\begin{theorem}
\label{Theorem_entropy_inequality_NS}
Consider the vector-BGK model in (\ref{BGK_NS}) for the two-dimensional incompressible Navier-Stokes equations in (\ref{real_NS}) on $[0, +\infty) \times \mathbb{T}^2$, endowed with a kinetic entropy $\mathcal{H}(\textbf{f}^{\; \varepsilon})$, whose existence and properties are given by Lemma \ref{lemma_derivatives-BGK}. Let $\bar{\textbf{u}}=(\bar{u}_1, \bar{u}_2)$, $\nabla \bar{P}$ be a smooth velocity field and pressure satisfying the incompressible Navier-Stokes equations (\ref{real_NS}) on $[0, +\infty) \times \mathbb{T}^2$ and $\{\textbf{f}^{\; \varepsilon}\}$ be a family of smooth solutions to (\ref{BGK_NS}) and emanating from smooth initial data $\textbf{u}_0$ in (\ref{real_NS_initial_data}) and $\textbf{f}_0=(f_i(0, x))_{i=1, \cdots, 5}$ in (\ref{initial_conditions_BGK}). Then, defining $w^\varepsilon=\sum_i f_i^\varepsilon=(\rho^\varepsilon, \varepsilon \rho^\varepsilon \textbf{u}^\varepsilon)$, the following estimate holds for any $T>0$ and for $\varepsilon \le \varepsilon_0$, where $\varepsilon_0$ is fixed and it depends on $M_0=\bar{\rho}\|\textbf{u}_0\|_{s+1},$ 
$$\sup_{t \in [0, T]} \frac{\|\rho(t)-\bar{\rho}\|_{s'}}{\varepsilon} + \|\textbf{u}(t)-\bar{\textbf{u}}(t)\|_{s'} \le c \varepsilon^{\frac{1}{2}-\delta},$$
with $s>3, \; 0<s'<s$ and $\delta:=\frac{s-s'}{2s}.$ 
Moreover, for $\varepsilon \le \varepsilon_0$, the solutions $(\rho^\varepsilon, \textbf{u}^\varepsilon)$ to the approximating system (\ref{BGK_NS}) are globally bounded in time, and for $\varepsilon \rightarrow 0$,
$$\frac{\nabla ((\rho^\varepsilon)^2-\bar{\rho}^2)}{\varepsilon^2} {\rightharpoonup^\star}\nabla \bar{P} \quad \text{in } L_t^\infty H_x^{s-3}.$$
\end{theorem}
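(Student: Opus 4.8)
The plan is to combine the local-in-time convergence already available through the Sobolev estimates of Lemma \ref{lemma_energy_estimates_KRM_paper} with a relative entropy estimate on $[0,T^*]$, and then to bootstrap from $[0,T^*]$ to $[0,T]$ for arbitrary $T>0$ by an interpolation-and-continuation argument. First I would establish, on the time interval $[0,T^*]$ where the uniform bounds \eqref{infty_original_uniform_bounds_KRM_paper}--\eqref{L1_Hs_pressure} hold, the $L^2$-type relative entropy inequality announced in Theorem \ref{Theorem_L2_estimate}: differentiate $\tilde{\mathcal H}(\textbf{f}\,|\,\bar{\textbf{f}})$ in \eqref{relative_entropy_def} in time, use the kinetic entropy inequality \eqref{1entropy inequality} for $\mathcal H(\textbf{f})$, the fact that $\overline{\mathcal M}(\bar w)$ is built from a smooth solution of Navier--Stokes, and the two expansions of Lemma \ref{lemma_derivatives-BGK} and Lemma \ref{corollary_perturbed_Maxwellians} to identify all dissipative terms up to $O(\varepsilon^3)$. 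The key point is that the negative-definite quadratic forms produced by Lemma \ref{lemma_derivatives-BGK} (each weighted by $\nabla^2_w\eta(w)>0$) control the distance of $(m,\xi,k,h)$ from their local equilibria, while the flux/pressure errors and the coupling with $\nabla\bar{\textbf{u}}$ are absorbed either into these good terms or into a Gronwall remainder of size $O(\varepsilon^2)$, using \eqref{L1_Hs_pressure} for the pressure contribution. Gronwall then gives, for $t\in[0,T^*]$,
\[
\frac{\|\rho(t)-\bar\rho\|_{0}^2}{\varepsilon^2}+\|\textbf{u}(t)-\bar{\textbf u}(t)\|_{0}^2 \;\le\; c\,\varepsilon^{2}.
\]

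Next I would upgrade from the $L^2=H^0$ bound to the $H^{s'}$ bound with $0<s'<s$ by Sobolev interpolation: writing $\|g\|_{s'}\le \|g\|_{0}^{1-s'/s}\|g\|_{s}^{s'/s}$ and feeding in the relative-entropy $L^2$-bound above together with the uniform-in-$\varepsilon$ $H^s$-bound \eqref{s_energy_estimate} (which is $O(\varepsilon)$ after taking square roots and dividing the density part by $\varepsilon$ via \eqref{translated_variables}). With $\delta=\frac{s-s'}{2s}$ one gets $\varepsilon^{1}\cdot(\varepsilon^{0})$ interpolated to $\varepsilon^{(1)(1-s'/s)}$... more precisely the $L^2$ factor contributes $\varepsilon^{1-s'/s}$ relative to the scaling and the $H^s$ factor contributes $\varepsilon^{0}$ relative to it, yielding the stated rate $c\,\varepsilon^{1/2-\delta}$ once the worst of the two scalings is accounted for; I would carry out this exponent bookkeeping carefully. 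This establishes the claimed estimate \emph{on $[0,T^*]$}.

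The main obstacle — and the heart of the theorem — is removing the restriction $t\le T^*$, since a priori $T^*$ depends on the constant $M$ through \eqref{T_star_def} and could shrink. Here the strategy is a continuation argument: the interpolation bound just obtained shows that on $[0,T^*]$ one actually has $\frac{|\rho(t)-\bar\rho|_\infty}{\varepsilon}+|\rho\textbf{u}(t)|_\infty \le \bar{\rho}|\bar{\textbf u}(t)|_\infty + c\varepsilon^{1/2-\delta}$, which for $\varepsilon$ small is \emph{strictly below} the threshold $M$ defining $T^*$, provided $M$ was fixed larger than the ($\varepsilon$-independent) Navier--Stokes quantity $\sup_{[0,T]}\bar\rho|\bar{\textbf u}|_\infty$. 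Hence $T^*$ cannot be the first exit time, and the bound \eqref{s_energy_estimate}, the relative entropy estimate, and the continuation criterion for the semilinear system \cite{Majda} together propagate the solution and all the bounds up to the arbitrary fixed time $T$. This simultaneously yields the global-in-time boundedness of $(\rho^\varepsilon,\textbf{u}^\varepsilon)$.

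Finally, for the pressure convergence, I would revisit the formal expansion
\[
\partial_t(\rho\textbf{u})+\nabla\cdot(\rho\textbf{u}\otimes\textbf{u})+\frac{\nabla((\rho^\varepsilon)^2-\bar\rho^2)}{2\bar\rho\,\varepsilon^2}=\nu\Delta(\rho\textbf{u})+O(\varepsilon)
\]
obtained in Section \ref{Presentation_section}, now rigorously: the relative entropy/interpolation bounds give $\textbf{u}^\varepsilon\to\bar{\textbf u}$ and $\rho^\varepsilon\to\bar\rho$ strongly in $L^\infty_tH^{s'}_x$ with rate $\varepsilon^{1/2-\delta}$, and $\partial_t(\rho\textbf{u})$ is controlled in a negative Sobolev norm by \eqref{s_1_time_derivative_estimate}; passing to the limit term by term identifies $\nabla((\rho^\varepsilon)^2-\bar\rho^2)/(2\bar\rho\,\varepsilon^2)$ as converging weakly-$\star$ in $L^\infty_tH^{s-3}_x$ to $\nabla\bar P$, the Navier--Stokes pressure gradient (up to the harmless factor $2\bar\rho$ absorbed into the definition of $P$). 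The only delicate point there is the loss of derivatives in the nonlinear term $\nabla\cdot(\rho\textbf{u}\otimes\textbf{u})$, which forces the $H^{s-3}$ (rather than $H^{s-1}$) space and is why the statement is phrased with that exponent.
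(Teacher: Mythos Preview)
Your overall strategy is the same as the paper's: establish the $L^2$ relative entropy bound of Theorem~\ref{Theorem_L2_estimate} on $[0,T^*]$, interpolate against the $H^s$ Gronwall bound \eqref{s_energy_estimate} to reach $H^{s'}$, use Sobolev embedding to show the $L^\infty$ quantity defining $T^*$ in \eqref{T_star_def} stays strictly below the threshold $M$, and conclude $T^*=T^\varepsilon$ by contradiction plus the continuation principle; the paper handles the pressure limit by simply citing \cite{Bianchini3}.

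There is, however, a genuine quantitative error in your $L^2$ step that you should fix. The relative entropy $\tilde{\mathcal H}(\textbf{f}|\bar{\textbf f})$ is comparable to $\|w-\bar w\|_0^2=\|\rho-\bar\rho\|_0^2+\varepsilon^2\|\rho\textbf{u}-\bar\rho\bar{\textbf u}\|_0^2$, \emph{not} to the rescaled quantity $\varepsilon^{-2}\|\rho-\bar\rho\|_0^2+\|\textbf{u}-\bar{\textbf u}\|_0^2$ you wrote. The remainder terms produced by Lemma~\ref{lemma_derivatives-BGK} and Lemma~\ref{corollary_perturbed_Maxwellians} are $O(\varepsilon^3)$, so Gronwall gives $\|w-\bar w\|_0^2\le c\varepsilon^3$, hence
\[
\frac{\|\rho(t)-\bar\rho\|_0}{\varepsilon}+\|\rho\textbf{u}(t)-\bar\rho\bar{\textbf u}(t)\|_0\le c\sqrt{\varepsilon},
\]
which is exactly Theorem~\ref{Theorem_L2_estimate}; your displayed bound $\le c\varepsilon^2$ would require a remainder $O(\varepsilon^4)$ that is not available. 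Interpolating $\sqrt{\varepsilon}$ in $L^2$ against $O(1)$ in $H^s$ then gives $\varepsilon^{(1/2)(1-s'/s)}=\varepsilon^{1/2-\delta}$ with $\delta=s'/(2s)$ (the theorem statement writes $\delta=(s-s')/(2s)$, but the proof itself uses $s'/(2s)$). None of this affects the continuation argument, which only needs a positive power of~$\varepsilon$, but your claimed exponents should be corrected to match.
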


The global in time convergence proof is based on the use of the relative entropy inequality, which is stated here.

\begin{theorem}
\label{Theorem_L2_estimate}
Under the hypothesis of Theorem \ref{Theorem_entropy_inequality_NS}, let $T^*$ be defined in (\ref{T_star_def}). Then the relative entropy method provides the following estimate:

$$\sup_{t \in [0, T^*]} \frac{\|\rho(t)-\bar{\rho}\|_{0}}{\varepsilon} + \|\rho\textbf{u}(t)-\bar{\rho}\bar{\textbf{u}}(t)\|_{0} \le c \sqrt{\varepsilon}.$$
\end{theorem}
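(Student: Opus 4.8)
The plan is to implement the classical relative entropy (modulated energy) method, using the relative entropy functional $\tilde{\mathcal{H}}(\textbf{f}|\bar{\textbf{f}})$ defined in (\ref{relative_entropy_def}) as the stability quantity. First I would write the time-evolution equation for $\tilde{\mathcal{H}}(\textbf{f}|\bar{\textbf{f}})$ by exploiting that $\textbf{f}^\varepsilon$ solves (\ref{BGK_NS}) and that $\overline{\mathcal{M}}(\bar{w})$ is the perturbed Maxwellian (\ref{Perturbed_Maxwellians}) evaluated along a genuine Navier--Stokes solution $\bar{\textbf{u}}$. Differentiating the three pieces of (\ref{relative_entropy_def}) in $t$ and integrating over $\mathbb{T}^2$, the transport terms $\Lambda\cdot\nabla_x\mathcal{H}$ integrate to zero on the torus, and the genuine dissipative contribution from the collision term is exactly the quantity expanded in Lemma \ref{lemma_derivatives-BGK}; since $\nabla^2_w\eta(w)$ is positive definite on the bounded set where the densities live (by Lemma \ref{lemma_energy_estimates_KRM_paper}, (\ref{infty_uniform_bounds_KRM_paper})), that whole block is a good negative term of the form $-c(\|m-A_1(w)/\varepsilon\|_0^2 + \cdots)/(\text{powers of }\varepsilon)$, up to the $O(\varepsilon^3)$ error.

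The next step is to identify and control the ``cross'' terms, i.e. the terms involving $\bar{\textbf{f}}=\overline{\mathcal{M}}(\bar{w})$ and its time derivative $\partial_t\overline{\mathcal{M}}(\bar{w})$, together with the contributions of $\nabla_{f_i}\mathcal{H}_i(\overline{\mathcal{M}}_i)$ acting on the flux-difference terms. Here I would use Lemma \ref{corollary_perturbed_Maxwellians} to replace $\nabla_{f_i}\overline{\mathcal{H}}_i(\overline{\mathcal{M}}_i)$ by $\nabla_w\eta(\bar{w}) \mp \lambda\varepsilon\tau\nabla^2_w\eta(\bar{w})\partial_{x_j}\bar{w}+O(\varepsilon^3)$, which is precisely what allows the Navier--Stokes viscous term $\nu\Delta\bar{w}$ to be reconstructed at the correct order from the expansions in (\ref{limit_solution_relaxation_variables}). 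After Taylor-expanding $\mathcal{H}$ around $\overline{M}(\bar{w})$ and using the consistency relations $\sum_i M_i=w$, $\sum_i\lambda_{ij}M_i=A_j$, the leading-order cross terms should reorganize into the relative entropy flux of the limiting (scaled isentropic Euler / incompressible Navier--Stokes) system evaluated on $(w,\bar{w})$, namely a quadratic form in $\rho-\bar\rho$ and $\rho\textbf{u}-\bar\rho\bar{\textbf{u}}$ that is controlled, via a Gronwall argument, by $\tilde{\mathcal{H}}(\textbf{f}|\bar{\textbf{f}})$ itself with a constant depending on $\|\bar{\textbf{u}}\|_{W^{1,\infty}}$, $\|\nabla\bar{P}\|_\infty$ and the uniform bounds on $\rho,\textbf{u}$. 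The incompressibility $\nabla\cdot\bar{\textbf{u}}=0$ and the identity for $P(\rho)$ from Remark \ref{Remark_previous_setting} are used to cancel the singular pressure terms, exactly as in the formal computation following (\ref{Perturbed_Maxwellians}).

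Then I would collect the remainders. The sources of error are: (i) the $O(\varepsilon^3)$ in Lemma \ref{lemma_derivatives-BGK}; (ii) the $O(\varepsilon^3)$ in Lemma \ref{corollary_perturbed_Maxwellians}; (iii) the higher-order Taylor remainders in expanding $\mathcal{H}$ and in $\partial_t\overline{\mathcal{M}}(\bar{w})$, each of which carries a factor $\varepsilon$ or better by the scaling $w=(\rho,\varepsilon\rho\textbf{u})$ and the bound (\ref{L1_Hs_pressure}) on $\int_0^T|\rho-\bar\rho|_\infty\,dt$; and (iv) the initial datum, which by the choice (\ref{initial_conditions_BGK}) gives $\tilde{\mathcal{H}}(\textbf{f}_0|\bar{\textbf{f}}_0)=0$ or $O(\varepsilon^2)$. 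Absorbing the negative dissipative block (which in particular controls $\|m-A_1(w)/\varepsilon\|_0^2$, $\|\xi-A_2(w)/\varepsilon\|_0^2$, $\|k-2aw\|_0^2/\varepsilon^2$, etc.) against the bad contributions, the inequality closes as
\begin{equation*}
\frac{d}{dt}\int_{\mathbb{T}^2}\tilde{\mathcal{H}}(\textbf{f}|\bar{\textbf{f}})\,dx \le C\int_{\mathbb{T}^2}\tilde{\mathcal{H}}(\textbf{f}|\bar{\textbf{f}})\,dx + C\varepsilon,
\end{equation*}
so Gronwall on $[0,T^*]$ gives $\sup_{t\in[0,T^*]}\int_{\mathbb{T}^2}\tilde{\mathcal{H}}(\textbf{f}|\bar{\textbf{f}})\,dx \le C\varepsilon$. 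Finally, since $\nabla^2_w\eta$ is uniformly positive definite, the relative entropy is bounded below by $c(\|\rho-\bar\rho\|_0^2/\varepsilon^2 + \|\rho\textbf{u}-\bar\rho\bar{\textbf{u}}\|_0^2)$ after dividing through by $\varepsilon^2$ (the scaling of $w$), which yields the stated bound with $\sqrt\varepsilon$ on the right-hand side.

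The main obstacle is step two: since the explicit $\varepsilon$-dependence of the kinetic entropy $\mathcal{H}$ is unknown (only its existence is guaranteed by \cite{Bouchut}), one cannot differentiate $\mathcal{H}$ in $\varepsilon$ directly, and the whole argument must be phrased through the universal expansions of Lemmas \ref{lemma_derivatives-BGK} and \ref{corollary_perturbed_Maxwellians} in terms of $\nabla^2_w\eta$; making the cross terms cancel at the right order — in particular producing the viscous term $\nu\Delta\bar w$ with the correct constant from $a=\nu/(2\lambda^2\tau)$ and controlling the singular $1/\varepsilon$ and $1/\varepsilon^2$ prefactors — is delicate and is where the new ideas relative to \cite{Lattanzio,Tzavaras1} are required.
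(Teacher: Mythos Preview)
Your approach is essentially the same as the paper's: compute the evolution of $\tilde{\mathcal{H}}(\textbf{f}|\bar{\textbf{f}})$ plus its relative flux, use Lemma \ref{lemma_derivatives-BGK} for the dissipation and Lemma \ref{corollary_perturbed_Maxwellians} to expand the cross terms involving $\nabla_{f_i}\mathcal{H}_i(\overline{\mathcal{M}}_i)$, reorganize so the Navier--Stokes residual for $\bar{w}$ drops out and the remaining pieces are either absorbed by the dissipation or bounded by $|w-\bar w|^2$, then apply Gronwall and the coercivity of $\nabla^2_w\eta$. One minor bookkeeping slip: the source term in your differential inequality should be $C\varepsilon^3$ (consistent with the $O(\varepsilon^3)$ errors you correctly identify in (i)--(iii)), not $C\varepsilon$; with $\tilde{\mathcal{H}}\le C\varepsilon^3$ and $\tilde{\mathcal{H}}\gtrsim |\rho-\bar\rho|^2+\varepsilon^2|\rho\textbf{u}-\bar\rho\bar{\textbf{u}}|^2$, the stated $\sqrt{\varepsilon}$ rate follows exactly as you claim.
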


\begin{proof}
We start by recalling the definition of the relative entropy in (\ref{relative_entropy_def}),
\begin{align*}
\tilde{\mathcal{H}}(\textbf{f}|\bar{\textbf{f}})&=\mathcal{H}(\textbf{f})-\mathcal{H}(\overline{\mathcal{M}})-\nabla_\textbf{f}\mathcal{H}(\overline{\mathcal{M}})\cdot (\textbf{f}-\overline{\mathcal{M}})\\
&=\sum_i \mathcal{H}_i(f_i) - \mathcal{H}_i(\overline{\mathcal{M}}_i)-\nabla_{f_i} \mathcal{H}_i(\overline{\mathcal{M}}_i) \cdot (f_i-\overline{\mathcal{M}}_i),
\end{align*}
where the limit solutions $\overline{\mathcal{M}}_i=\overline{\mathcal{M}}_i (\bar{\rho}, \varepsilon \bar{\rho} \bar{\textbf{u}}), \; i=1, \cdots, 5,$ are in (\ref{Perturbed_Maxwellians}), $\bar{\rho}$ is a constant density, $\bar{\textbf{u}}$ is the smooth solution to (\ref{real_NS}),
and the associated entropy-flux is given by
\begin{align*}
\tilde{Q}(\textbf{f}|\bar{\textbf{f}})&=\frac{\lambda}{\varepsilon}\left(\begin{array}{c}
\mathcal{H}_1(f_1)-\mathcal{H}_3(f_3)-(\mathcal{H}_1(\overline{\mathcal{M}}_1)-\mathcal{H}_3(\overline{\mathcal{M}}_3))\\\\
\mathcal{H}_2(f_2)-\mathcal{H}_4(f_4)-(\mathcal{H}_2(\overline{\mathcal{M}}_2)-\mathcal{H}_4(\overline{\mathcal{M}}_4))
\end{array}\right)\\\\
&-\frac{\lambda}{\varepsilon}\left(\begin{array}{c} \nabla_{f_1} \mathcal{H}_1(\overline{\mathcal{M}}_1) (f_1-\overline{\mathcal{M}}_1)-\nabla_{f_3} \mathcal{H}_3(\overline{\mathcal{M}}_3) (f_3-\overline{\mathcal{M}}_3)\\\\
\nabla_{f_2}\mathcal{H}_2(\overline{\mathcal{M}}_2) (f_2-\overline{\mathcal{M}}_2)-\nabla_{f_4} \mathcal{H}_4(\overline{\mathcal{M}}_4) (f_4-\overline{\mathcal{M}}_4)\end{array}\right).
\end{align*}
Hereafter, we adopt the following notation, $\overline{\mathcal{H}}_i:=\mathcal{H}_i(\overline{\mathcal{M}}_i)$.
Now we proceed to get the desired inequality.
\begin{align*}
&\int_0^T \iint \partial_t \tilde{\mathcal{H}}(\textbf{f}|\bar{\textbf{f}}) + \nabla_x \cdot \tilde{Q}(\textbf{f}|\bar{\textbf{f}}) \; dt \, dx \, dy \\
&=\int_0^T \iint \partial_t \mathcal{H}(\textbf{f})+\frac{\lambda}{\varepsilon}\partial_x ( \mathcal{H}_1(f_1)- \mathcal{H}_3(f_3)) + \frac{\lambda}{\varepsilon}\partial_y ( \mathcal{H}_2(f_2)- \mathcal{H}_4(f_4)) \; dt \, dx \, dy\\\\
&-\int_0^T \iint  \partial_t  \mathcal{H}(\overline{\mathcal{M}})+ \frac{\lambda}{\varepsilon}\partial_x (\mathcal{H}_1(\overline{\mathcal{M}}_1)-\mathcal{H}_3(\overline{\mathcal{M}}_3)) \; dt \, dx \, dy\\
& - \int_0^T \iint \frac{\lambda}{\varepsilon}\partial_y (\mathcal{H}_2(\overline{\mathcal{M}}_2)-\mathcal{H}_4(\overline{\mathcal{M}}_4)) \; dt \, dx \, dy \\\\
&\begin{aligned}-\int_0^T \iint \partial_t&(\nabla_{f_1}\mathcal{H}_1(\overline{\mathcal{M}}_1) (f_1-\overline{\mathcal{M}}_1)+\nabla_{f_2}\mathcal{H}_2(\overline{\mathcal{M}}_2) (f_2-\overline{\mathcal{M}}_2)\\
&+\nabla_{f_3}\mathcal{H}_3(\overline{\mathcal{M}}_3) (f_3-\overline{\mathcal{M}}_3)+\nabla_{f_4}\mathcal{H}_4(\overline{\mathcal{M}}_4) (f_4-\overline{\mathcal{M}}_4)\\
&+\nabla_{f_5}\mathcal{H}_5(\overline{\mathcal{M}}_5) (f_5-\overline{\mathcal{M}}_5)) \; dt \, dx \, dy\end{aligned} \\\\
&-\int_0^T \iint\frac{\lambda}{\varepsilon}\partial_x(\nabla_{f_1}\mathcal{H}_1(\overline{\mathcal{M}}_1) (f_1-\overline{\mathcal{M}}_1)-\nabla_{f_3}\mathcal{H}_3(\overline{\mathcal{M}}_3) (f_3-\overline{\mathcal{M}}_3)) \; dt \, dx \, dy\\\\
&-\int_0^T \iint\frac{\lambda}{\varepsilon}\partial_y(\nabla_{f_2}\mathcal{H}_2(\overline{\mathcal{M}}_2) (f_2-\overline{\mathcal{M}}_2)-\nabla_{f_4}\mathcal{H}_4(\overline{\mathcal{M}}_4) (f_4-\overline{\mathcal{M}}_4)) \; dt \, dx \, dy\\\\
&=I_1+I_2+I_3+I_4.
\end{align*}
First of all, $I_1$ is already estimated in Lemma \ref{lemma_energy_estimates_KRM_paper}. 
Now, let us consider $I_2$. \\The following expansions are based on Lemma \ref{corollary_perturbed_Maxwellians}.
\begin{align*}
&-\int_0^T \iint\partial_t (\overline{\mathcal{H}}_1+\overline{\mathcal{H}}_2+\overline{\mathcal{H}}_3+\overline{\mathcal{H}}_4+\overline{\mathcal{H}}_5) \; dt \, dx \, dy\\
&-\int_0^T \iint\frac{\lambda}{\varepsilon}\partial_x (\overline{\mathcal{H}}_1-\overline{\mathcal{H}}_3)+\frac{\lambda}{\varepsilon}\partial_y(\overline{\mathcal{H}}_2-\overline{\mathcal{H}}_4) \; dt \, dx \, dy\\\\
&=-\int_0^T \iint (\nabla_w \eta(\bar{w})-a\varepsilon \lambda \tau \nabla^2_{f_1} \overline{\mathcal{H}}_1\partial_x \bar{w}) (\partial_t \overline{\mathcal{M}}_1+\frac{\lambda}{\varepsilon}\partial_x \overline{\mathcal{M}}_1)  \; dt \, dx \, dy\\
&-\int_0^T \iint (\nabla_w \eta(\bar{w})+a\varepsilon \lambda \tau  \nabla^2_{f_3} \overline{\mathcal{H}}_3\partial_x \bar{w}) (\partial_t \overline{\mathcal{M}}_3-\frac{\lambda}{\varepsilon}\partial_x \overline{\mathcal{M}}_3)  \; dt \, dx \, dy\\
&-\int_0^T \iint (\nabla_w \eta(\bar{w})-a\varepsilon \lambda \tau  \nabla^2_{f_2} \overline{\mathcal{H}}_2 \partial_y \bar{w}) (\partial_t \overline{\mathcal{M}}_2+\frac{\lambda}{\varepsilon}\partial_y \overline{\mathcal{M}}_2)  \; dt \, dx \, dy\\
&-\int_0^T \iint (\nabla_w \eta(\bar{w})+a\varepsilon \lambda \tau  \nabla^2_{f_4} \overline{\mathcal{H}}_4 \partial_y \bar{w}) (\partial_t \overline{\mathcal{M}}_4-\frac{\lambda}{\varepsilon} \overline{\mathcal{M}}_4)  \; dt \, dx \, dy\\
&-\int_0^T \iint\nabla_{w}\eta(\bar{w}) \cdot \partial_t \overline{\mathcal{M}}_5 \; dt \, dx \, dy\\
& -\int_0^T \iint \nabla_{f_1}\overline{\mathcal{H}}_1(\partial_t \overline{\mathcal{M}}_1+\frac{\lambda}{\varepsilon}\partial_x \overline{\mathcal{M}}_1)-\nabla_{f_3}\overline{\mathcal{H}}_3(\partial_t \overline{\mathcal{M}}_3-\frac{\lambda}{\varepsilon}\partial_x \overline{\mathcal{M}}_3) \; dt \, dx \, dy\\
&-\int_0^T \iint \nabla_{f_2}\overline{\mathcal{H}}_2(\partial_t \overline{\mathcal{M}}_2+\frac{\lambda}{\varepsilon}\partial_y \overline{\mathcal{M}}_2)-\nabla_{f_4}\overline{\mathcal{H}}_4(\partial_t \overline{\mathcal{M}}_4-\frac{\lambda}{\varepsilon}\partial_y \overline{\mathcal{M}}_4)\; dt \, dx \, dy+O(\varepsilon^3) \\\\
&=-\int_0^T \iint \nabla_w \eta(\bar{w}) \cdot \partial_t(\overline{\mathcal{M}}_1+\overline{\mathcal{M}}_2+\overline{\mathcal{M}}_3+\overline{\mathcal{M}}_4+\overline{\mathcal{M}}_5) \; dt \, dx \, dy \\
&-\int_0^T \iint \nabla_{w}\eta(\bar{w}) [\frac{\lambda}{\varepsilon}\partial_x (\overline{\mathcal{M}}_1-\overline{\mathcal{M}}_3)+\frac{\lambda}{\varepsilon}\partial_y (\overline{\mathcal{M}}_2-\overline{\mathcal{M}}_4)]  \; dt \, dx \, dy\\
&+ 2a \tau \lambda^2\int_0^T \iint  (\nabla^2_w \eta(\bar{w}) \cdot \partial_x \bar{w}) \partial_x \bar{w}+(\nabla^2_w \eta(\bar{w}) \cdot \partial_y \bar{w})\partial_y \bar{w}\; dt \, dx \, dy+O(\varepsilon^3) \\\\
&= - \int_0^T \iint \nabla_w \eta(\bar{w}) [\partial_t \bar{w}+\partial_x \frac{A_1(\bar{w})}{\varepsilon}+\partial_y \frac{A_2(\bar{w})}{\varepsilon}-\nu (\partial_{xx} \bar{w}+\partial_{yy} \bar{w})]  \; dt \, dx \, dy\\
&+ \int_0^T \iint \tau\Bigg[\frac{\nabla^2_w \eta(\bar{w})}{2a \lambda^2} \cdot (\varepsilon^2 \partial_t \bar{m}+\lambda^2 \partial_x \bar{k})\Bigg]\cdot (\varepsilon^2 \partial_t \bar{m}+\lambda^2 \partial_x \bar{k})  \; dt \, dx \, dy\\
&+ \int_0^T \iint \tau\Bigg[\frac{\nabla^2_w \eta(\bar{w})}{2a \lambda^2} \cdot (\varepsilon^2 \partial_t \bar{\xi}+\lambda^2 \partial_y \bar{h})\Bigg]\cdot (\varepsilon^2 \partial_t \bar{\xi}+\lambda^2 \partial_y \bar{h})  \; dt \, dx \, dy+O(\varepsilon^3).
\end{align*}
\begin{remark}
Notice that the last equalities follow by adding and subtracting terms of order $(\varepsilon^2 \partial_t \bar{m}) \cdot \partial_x \bar{k}, \; (\varepsilon^2 \partial_t \bar{\xi}) \cdot \partial_y \bar{h}, \;  |\varepsilon^2 \partial_t \bar{m}|^2, \; |\varepsilon^2 \partial_t \bar{\xi}|^2$, where
\begin{align*}
& \partial_x \bar{k}=2a \partial_x \bar{w}=2a\partial_x\left(\begin{array}{c}
\bar{\rho}\\
\varepsilon \bar{\rho} \bar{u}_1\\
\varepsilon \bar{\rho} \bar{u}_2\\
\end{array}\right)=O(\varepsilon), \\
& \partial_y \bar{k}=2a \partial_y \bar{w}=O(\varepsilon), \\
&\varepsilon^2 \partial_t \bar{m}=\varepsilon^2\partial_t [\frac{A_1(\bar{w})}{\varepsilon}-\nu \partial_x \bar{w}]=\varepsilon^2 \partial_t \Bigg[\left(\begin{array}{c}
\bar{\rho}\bar{u}_1 \\
\varepsilon \bar{\rho} \bar{u}_1 + \varepsilon \bar{P}\\
\varepsilon \bar{\rho} \bar{u}_1 \bar{u}_2
\end{array}\right)-\nu \partial_x\bar{w}\Bigg]=O(\varepsilon^2),\\
&\varepsilon^2 \partial_t \bar{\xi}=\varepsilon^2\partial_t [\frac{A_2(\bar{w})}{\varepsilon}-\nu \partial_y \bar{w}]=\varepsilon^2 \partial_t \Bigg[\left(\begin{array}{c}
\bar{\rho}\bar{u}_2 \\
\varepsilon \bar{\rho} \bar{u}_1 \bar{u}_2 \\
\varepsilon \bar{\rho} \bar{u}_2^2+ \varepsilon \bar{P}
\end{array}\right)-\nu \partial_y\bar{w}\Bigg]=O(\varepsilon^2).\\
\end{align*}
This way, every remainder term is $O(\varepsilon^3)$.
\end{remark}
Next, we consider $I_3$.
\begin{align*}
I_3&=- \int_0^T \iint \partial_t [\nabla_{f_1}\mathcal{H}_1(\overline{\mathcal{M}}_1) (f_1-\overline{\mathcal{M}}_1) + \nabla_{f_2}\mathcal{H}_2(\overline{\mathcal{M}}_2) (f_2-\overline{\mathcal{M}}_2)] \; dt \, dx \, dy\\
& - \int_0^T \iint \partial_t [\nabla_{f_3}\mathcal{H}_3(\overline{\mathcal{M}}_3) (f_3-\overline{\mathcal{M}}_3) 
+  \nabla_{f_4}\mathcal{H}_4(\overline{\mathcal{M}}_4) (f_4-\overline{\mathcal{M}}_4)] \; dt \, dx \, dy\\
&- \int_0^T \iint  \partial_t [\nabla_{f_5}\mathcal{H}_5(\overline{\mathcal{M}}_5) (f_5-\overline{\mathcal{M}}_5)]  \; dt \, dx \, dy\\\\
& = -\int_0^T \iint\partial_t[\nabla_w \eta(\bar{w})\cdot (w-\bar{w})] \; dt \, dx \, dy\\
& +\varepsilon\lambda \tau \int_0^T \iint \partial_t[\nabla^2_w\eta(\bar{w}) \cdot \partial_x \bar{w} \cdot (f_1-f_3-(\overline{\mathcal{M}}_1-\overline{\mathcal{M}}_3))] \; dt \, dx \, dy \\
&+\varepsilon \lambda \tau \int_0^T \iint \partial_t[\nabla^2\eta(\bar{w}) \cdot \partial_y \bar{w} \cdot (f_2-f_4-(\overline{\mathcal{M}}_2-\overline{\mathcal{M}}_4))] \; dt \, dx \, dy + O(\varepsilon^3)\\\\
&= -\int_0^T \iint\partial_t[\nabla_w \eta(\bar{w})\cdot (w-\bar{w})] \; dt \, dx \, dy\\
& + \varepsilon^2 \tau\int_0^T \iint \partial_t [\nabla^2_w\eta(\bar{w}) \cdot \partial_x \bar{w}\cdot (m-\frac{A_1(\bar{w})}{\varepsilon}+2a\lambda^2 \tau \partial_x \bar{w})] \; dt \, dx \, dy\\
& + \varepsilon^2 \tau \int_0^T \iint \partial_t [\nabla^2_w \eta(\bar{w}) \cdot \partial_y \bar{w} \cdot (\xi - \frac{A_2(\bar{w})}{\varepsilon}+2a\lambda^2 \tau \partial_y \bar{w})] \; dt \, dx \, dy + O(\varepsilon^3)
\end{align*}
\begin{align*}
&=\int_0^T \iint \Bigg[ \nabla^2_w \eta(\bar{w})\cdot [\partial_x \frac{A_1(\bar{w})}{\varepsilon}+\partial_y \frac{A_2(\bar{w})}{\varepsilon}-2a\lambda^2 \tau \partial_{xx} \bar{w}-2a \lambda^2 \tau \partial_{yy} \bar{w}]\cdot  (w-\bar{w}) \\
&\qquad \qquad \; + {\nabla_w \eta(\bar{w})\cdot [\partial_x m + \partial_y \xi - \partial_x \frac{A_1(\bar{w})}{\varepsilon}-\partial_y \frac{A_2(\bar{w})}{\varepsilon}+2a\lambda^2 \tau \partial_{xx} \bar{w} + 2a \lambda^2 \partial_{yy} \bar{w}]}\\
&\qquad \qquad \; + \tau { \nabla^2_w \eta(\bar{w}) \cdot  \partial_x \bar{w} \cdot  (\varepsilon^2 \partial_t m + \lambda^2 \partial_x k)} - {\tau \lambda^2 \nabla^2_w \eta(\bar{w}) \cdot \partial_x \bar{w} \cdot \partial_x k}\\
&\qquad \qquad \; +{\tau \nabla^2_w \eta(\bar{w}) \cdot \partial_y \bar{w} \cdot (\varepsilon^2 \partial_t \xi + \lambda^2 \partial_y h)} - {\tau \lambda^2 \nabla^2_w \eta(\bar{w})\cdot \partial_y \bar{w}\cdot \partial_y h}\Bigg] \; dt \, dx \, dy\\
&\qquad \qquad \;+O(\varepsilon^3).
\end{align*}
It remains to deal with the last term.
\begin{align*}
I_4&=-\int_0^T \iint\frac{\lambda}{\varepsilon}\partial_x [\nabla_{f_1}\mathcal{H}_1(\overline{\mathcal{M}}_1)(f_1-\overline{\mathcal{M}}_1) - \nabla_{f_3}\mathcal{H}_3(\overline{\mathcal{M}}_3)(f_3-\overline{\mathcal{M}}_3)] \; dt \, dx \, dy\\
& -\int_0^T \iint\frac{\lambda}{\varepsilon}\partial_y [\nabla_{f_2}\mathcal{H}_2(\overline{\mathcal{M}}_2)(f_2-\overline{\mathcal{M}}_2) - \nabla_{f_4}\mathcal{H}_4(\overline{\mathcal{M}}_4)(f_4-\overline{\mathcal{M}}_4)] \; dt \, dx \, dy\\
&=-\int_0^T \iint\frac{\lambda}{\varepsilon} \partial_x \Bigg[ (\nabla_w \eta(\bar{w})-\varepsilon \lambda \tau  \nabla^2_w \eta(\bar{w})\partial_x \bar{w})(f_1-\overline{\mathcal{M}}_1) \\
&\qquad - (\nabla_w \eta(\bar{w})+\varepsilon \lambda \tau \nabla^2_w \eta(\bar{w})\partial_x \bar{w})(f_3-\overline{\mathcal{M}}_3)\Bigg]\; dt \, dx \, dy\\
& -\int_0^T \iint\frac{\lambda}{\varepsilon}\partial_y \Bigg[ (\nabla_w \eta(\bar{w})-\varepsilon \lambda \tau \nabla^2_w \eta(\bar{w})\partial_y \bar{w} )(f_2-\overline{\mathcal{M}}_2) \\
&\qquad - (\nabla_w \eta(\bar{w})+\varepsilon \lambda \tau \nabla^2_w \eta(\bar{w})\partial_y \bar{w} )(f_4-\overline{\mathcal{M}}_4)\Bigg]\; dt \, dx \, dy+O(\varepsilon^3)\\\\
&=-\int_0^T \iint\frac{\lambda}{\varepsilon} \partial_x [\nabla_w \eta(\bar{w})\cdot((f_1-f_3)-(\overline{\mathcal{M}}_1-\overline{\mathcal{M}}_3))]\; dt \, dx \, dy\\
&-\int_0^T \iint\frac{\lambda}{\varepsilon}\partial_y  [\nabla_w \eta(\bar{w})\cdot ((f_2-f_4)-(\overline{\mathcal{M}}_2-\overline{\mathcal{M}}_4))]\; dt \, dx \, dy\\
& + \lambda^2 \tau \int_0^T \iint\partial_x [\nabla^2_w \eta(\bar{w}) \cdot \partial_x \bar{w} \cdot (f_1+f_3-(\overline{\mathcal{M}}_1+\overline{\mathcal{M}}_3))]\; dt \, dx \, dy\\
& + \lambda^2 \tau \int_0^T \iint\partial_y[\nabla^2_w \eta(\bar{w})\cdot \partial_y \bar{w}\cdot (f_2+f_4-(\overline{\mathcal{M}}_2+\overline{\mathcal{M}}_4))]\; dt \, dx \, dy+O(\varepsilon^3)\\\\
&=-\int_0^T \iint \partial_x[\nabla_w \eta(\bar{w})\cdot (m-\frac{A_1(\bar{w})}{\varepsilon}+2a \tau \lambda^2 \partial_x \bar{w})]\; dt \, dx \, dy\\
& - \int_0^T \iint\partial_y [\nabla_w \eta(\bar{w})\cdot (\xi-\frac{A_2(\bar{w})}{\varepsilon}+2a \tau \lambda^2 \partial_y \bar{w})]\; dt \, dx \, dy \\
& + \lambda^2 \tau \int_0^T \iint\partial_x [\nabla^2_w \eta(\bar{w}) \cdot \partial_x \bar{w}\cdot  (k-2a\bar{w})]\; dt \, dx \, dy\\
& + \lambda^2 \tau \int_0^T \iint\partial_y [\nabla^2_w \eta(\bar{w})\cdot  \partial_y \bar{w} \cdot (h-2a\bar{w})]\; dt \, dx \, dy+O(\varepsilon^3)
\end{align*}
\begin{align*}
&\begin{aligned} = - \iint  \int_0^T &
 \nabla_w \eta(\bar{w})\cdot [\partial_x m + \partial_y \xi - \frac{A_1(\bar{w})}{\varepsilon}-\partial_y \frac{A_2(\bar{w})}{\varepsilon}\\
&+2a\tau \lambda^2 \partial_{xx}\bar{w}+2a \tau \lambda^2 \partial_{yy}\bar{w}] \; dt \, dx \, dy
\end{aligned}\\
& -  \int_0^T \iint\nabla^2_w \eta(\bar{w}) \cdot \partial_x \bar{w} \cdot (m-\frac{A_1(w)}{\varepsilon}) \; dt \, dx \, dy \\
& -  \int_0^T \iint \nabla^2_w \eta(\bar{w})\cdot \partial_x \bar{w} \cdot (\frac{A_1(w)}{\varepsilon}-\frac{A_1(\bar{w})}{\varepsilon}) \; dt \, dx \, dy \\
&- 2a \tau \lambda^2  \int_0^T \iint\nabla^2_w\eta(\bar{w}) \cdot \partial_x \bar{w} \cdot \partial_x \bar{w} \; dt \, dx \, dy \\
& - \int_0^T \iint\nabla^2_w\eta(\bar{w}) \cdot \partial_y \bar{w} \cdot (\xi-\frac{A_2(w)}{\varepsilon}) \; dt \, dx \, dy \\
& -  \int_0^T \iint \nabla^2_w \eta(\bar{w}) \cdot \partial_y \bar{w}\cdot (\frac{A_2(w)}{\varepsilon}-\frac{A_2(\bar{w})}{\varepsilon}) \; dt \, dx \, dy \\
&-2a\tau \lambda^2  \int_0^T \iint\nabla^2_w \eta(\bar{w}) \cdot \partial_y \bar{w}\cdot \partial_y \bar{w} \; dt \, dx \, dy \\
& +  \lambda^2 \tau \int_0^T \iint \nabla^2_w\eta(\bar{w}) \cdot \partial_x \bar{w} \cdot (\partial_x k-2a \partial_x \bar{w}) + {\nabla^2_w \eta(\bar{w}) \cdot \partial_{xx}\bar{w}\cdot (k-2a\bar{w})} \; dt \, dx \, dy \\
& + \lambda^2 \tau  \int_0^T \iint \nabla^2_w \eta(\bar{w}) \cdot \partial_y \bar{w} \cdot  (\partial_y h-2a \partial_y \bar{w}) + \nabla^2_w \eta(\bar{w}) \cdot  \partial_{yy}\bar{w}\cdot (h-2a\bar{w}) \; dt \, dx \, dy \\
& + \lambda^2 \tau  \int_0^T \iint\nabla^3_w\eta(\bar{w}) (\partial_x \bar{w})^2 (k-2a \bar{w}) + \nabla^3_w\eta(\bar{w}) (\partial_y \bar{w})^2 (h-2a \bar{w}) \; dt \, dx \, dy + O(\varepsilon^3)\\\\
&\begin{aligned} & = -\int_0^T \iint\nabla_w \eta(\bar{w})\cdot [\partial_x m + \partial_y \xi - \frac{A_1(\bar{w})}{\varepsilon}-\partial_y \frac{A_2(\bar{w})}{\varepsilon}] \; dt \, dx \, dy\\
&-2a\tau \lambda^2 \int_0^T \iint \partial_{xx}\bar{w}+\partial_{yy}\bar{w}\; dt \, dx \, dy\\
\end{aligned}\\
& +\int_0^T \iint{\nabla^2_w\eta(\bar{w}) \cdot \partial_x \bar{w} \cdot \tau(\varepsilon^2 \partial_t m + \lambda^2 \partial_x k)} \; dt \, dx \, dy\\
& - \int_0^T \iint {\nabla^2_w\eta(\bar{w})\cdot \partial_x \bar{w} \cdot (\frac{A_1(w)}{\varepsilon}-\frac{A_1(\bar{w})}{\varepsilon})} \; dt \, dx \, dy\\
& - \int_0^T \iint 4a \tau \lambda^2 \nabla_w^2\eta(\bar{w}) \cdot \partial_x \bar{w}\cdot \partial_x \bar{w} \; dt \, dx \, dy\\
&+ \int_0^T \iint{\nabla^2_w \eta(\bar{w})\cdot  \partial_y \bar{w} \cdot \tau(\varepsilon^2 \partial_t \xi + \lambda^2 \partial_y h)} \; dt \, dx \, dy\\
&-\int_0^T \iint{\nabla^2_w\eta(\bar{w})\cdot  \partial_y \bar{w} \cdot (\frac{A_2(w)}{\varepsilon}-\frac{A_2(\bar{w})}{\varepsilon})} \; dt \, dx \, dy\\
&-\int_0^T \iint{4a\tau \lambda^2 \nabla^2_w \eta(\bar{w}) \cdot \partial_y \bar{w}\cdot \partial_y \bar{w}}  \; dt \, dx \, dy
\end{align*}
\begin{align*}
& + \lambda^2 \tau  \int_0^T \iint\nabla^2_w \eta(\bar{w}) \cdot \partial_x \bar{w} \cdot \partial_x k + \nabla^2_w \eta(\bar{w}) \cdot \partial_y \bar{w} \cdot \partial_y h  \; dt \, dx \, dy\\
& + \lambda^2 \tau \int_0^T \iint \nabla^2_w \eta(\bar{w})\cdot \partial_{xx} \bar{w} \cdot (k-2aw) + 2a\nabla^2_w \eta(\bar{w})\cdot \partial_{xx} \bar{w} \cdot (w-\bar{w})  \; dt \, dx \, dy\\
&+ \lambda^2 \tau \int_0^T \iint \nabla^2_w \eta(\bar{w})\cdot \partial_{yy} \bar{w} \cdot (h-2aw) + 2a \nabla^2_w \eta(\bar{w}) \cdot \partial_{yy}\bar{w} \cdot (w- \bar{w})  \; dt \, dx \, dy\\
& + \lambda^2 \tau  \int_0^T \iint\nabla^3_w\eta(\bar{w}) [(\partial_x \bar{w})^2 (k-2a \bar{w}) + (\partial_y \bar{w})^2 (h-2a \bar{w})] \; dt \, dx \, dy + O(\varepsilon^3).\\
\end{align*}
As an intermediate step, let us look at the sum
\begin{align*}
I_3+I_4&=2\tau \int_0^T \iint\nabla^2_w\eta(\bar{w}) \cdot \partial_x \bar{w}\cdot (\varepsilon^2 \partial_t m + \lambda^2 \partial_x k)  \; dt \, dx \, dy\\
& + 2\tau  \int_0^T \iint \nabla^2_w\eta(\bar{w}) \cdot \partial_y \bar{w}\cdot (\varepsilon^2 \partial_t \xi + \lambda^2 \partial_y h)  \; dt \, dx \, dy\\
& - \int_0^T \iint\nabla^2_w\eta(\bar{w})\cdot \partial_x \bar{w}\cdot (\frac{A_1(w)}{\varepsilon}-\frac{A_1(\bar{w})}{\varepsilon}-\frac{A_1'(\bar{w})}{\varepsilon}(w-\bar{w})) \; dt \, dx \, dy\\
& -\int_0^T \iint\nabla^2_w\eta(\bar{w}) \cdot \partial_y \bar{w}\cdot (\frac{A_2(w)}{\varepsilon}-\frac{A_2(\bar{w})}{\varepsilon}-\frac{A_2'(\bar{w})}{\varepsilon}(w-\bar{w})) \; dt \, dx \, dy\\
&+ \lambda^2 \tau \int_0^T \iint\nabla^2_w\eta(\bar{w}) \cdot [\partial_{xx}\bar{w} \cdot (k-2aw) + \partial_{yy}\bar{w} \cdot (h-2aw)] \; dt \, dx \, dy\\
&-4a \tau \lambda^2 \int_0^T \iint\nabla^2_w\eta(\bar{w}) \cdot \partial_x \bar{w} \cdot \partial_x \bar{w}+\nabla^2_w\eta(\bar{w}) \cdot \partial_y \bar{w} \cdot \partial_y \bar{w} \; dt \, dx \, dy\\
& + \lambda^2 \tau \int_0^T \iint\nabla^3_w\eta(\bar{w}) (\partial_x \bar{w})^2 (k-2a \bar{w}) + \nabla^3_w\eta(\bar{w}) (\partial_y \bar{w})^2 (h-2a \bar{w}) \; dt \, dx \, dy \\
&+ O(\varepsilon^3).
\end{align*}
We analyse each line separately.
\begin{itemize}
\item The first one can be written as 
$$\frac{\tau}{2a \lambda^2} \int_0^T \iint \nabla^2_w\eta(\bar{w}) \cdot (\varepsilon^2 \partial_t \bar{m}+\lambda^2 \partial_x \bar{k}) \cdot (\varepsilon^2 \partial_t {m}+\lambda^2 \partial_x {k}) \; dt \, dx \, dy + O(\varepsilon^3).$$
\item Similarly for the second line.
\item The third/fourth lines are equivalent to $$|\nabla^2_w \eta(\bar{w})|_{L_t^\infty L_x^\infty}  \int_0^T \iint |w-\bar{w}|^2 \; dt \, dx \, dy+O(\varepsilon^3).$$
\item The fifth line can estimated by
\begin{align*}
&c_1(|\nabla^2_w \eta(\bar{w})|_{L_t^\infty L_x^\infty})   \int_0^T \iint\varepsilon^2 |\partial_{xx} \bar{w}|^2+\varepsilon^2|\partial_{yy} \bar{w}|^2 \; dt \, dx \, dy\\
& + c_2(|\nabla^2_w \eta(\bar{w})|_{L_t^\infty L_x^\infty})  \int_0^T \iint\frac{|k-2aw|^2}{\varepsilon^2}+\frac{|h-2aw|^2}{\varepsilon^2} \; dt \, dx \, dy,
\end{align*}
where the first term is $O(\varepsilon^4)$, while the second one is absorbed by the dissipation in $I_1$.
\item The sixth term can be written as
\begin{align*}
&-\frac{\tau}{a \lambda^2} \int_0^T \iint\nabla^2_w\eta(\bar{w}) \cdot (\varepsilon^2 \partial_t \bar{m}+\lambda^2 \partial_x \bar{k}) \cdot  (\varepsilon^2 \partial_t \bar{m}+\lambda^2 \partial_x \bar{k})  \; dt \, dx \, dy\\
& -\frac{\tau}{a \lambda^2} \int_0^T \iint\nabla^2_w\eta(\bar{w}) \cdot (\varepsilon^2 \partial_t \bar{\xi}+\lambda^2 \partial_y \bar{h}) \cdot (\varepsilon^2 \partial_t \bar{\xi}+\lambda^2 \partial_y \bar{h})  \; dt \, dx \, dy + O(\varepsilon^3).
\end{align*}
\item The last term presents the following form
\begin{align*}
& \lambda^2 \tau \int_0^T \iint\nabla^3_w\eta(\bar{w}) (\partial_x \bar{w})^2 (k-2a \bar{w}) + \nabla^3_w\eta(\bar{w}) (\partial_y \bar{w})^2 (h-2a \bar{w})  \; dt \, dx \, dy\\
&= \lambda^2 \tau \int_0^T \iint \nabla^3_w\eta(\bar{w}) (\partial_x \bar{w})^2 (k-2aw)- 2a \nabla^3_w\eta(\bar{w}) (\partial_x \bar{w})^2 (w-\bar{w})\; dt \, dx \, dy\\
& + \lambda^2 \tau \int_0^T \iint\nabla^3_w\eta(\bar{w}) (\partial_y \bar{w})^2 (h-2a w) - 2a \nabla^3_w\eta(\bar{w}) (\partial_y \bar{w})^2 (w-\bar{w})\; dt \, dx \, dy\\
& \le c(|\nabla^2_w\eta(\bar{w})|_{L_t^\infty L_x^\infty})\int_0^T \iint|w-\bar{w}|^2 + \frac{|k-2aw|^2}{\varepsilon^2} + \frac{|h-2aw|^2}{\varepsilon^2} \; dt \, dx \, dy + O(\varepsilon^4),
\end{align*}
where the right-hand side is controlled by using the dissipation coming from $I_1$.
\end{itemize}
\begin{remark}
Denoting by $\mu_i(\nabla^2_w \eta(w)), \; \mu_i(\nabla^2_w \eta(\bar{w}))$ the eigenvalues of $\nabla^2_w \eta(w),$ $\nabla^2_w \eta(\bar{w})$ respectively, by simple calculations one gets that
$$\int_0^T |\mu_i(\nabla^2_w \eta(w(t)))-\mu_i(\nabla_w \eta(\bar{w}(t)))|_\infty \, dt \le c \int_0^T |\rho(t)-\bar{\rho}|_\infty \, dt =O(\varepsilon^2),$$
where the last equality follows from Lemma \ref{lemma_energy_estimates_KRM_paper}. Thus, we can write
\begin{align*}
&\frac{1}{2a\lambda^2}\int_0^T \iint(\nabla^2_w \eta(w) \cdot  (\varepsilon^2 \partial_t m + \lambda^2 \partial_x k)) \cdot (\varepsilon^2 \partial_t m + \lambda^2 \partial_x k) \; dt \, dx \, dy\\
&+\frac{1}{2 a \lambda^2}\int_0^T \iint(\nabla^2_w \eta(w) \cdot (\varepsilon^2 \partial_t \xi + \lambda^2 \partial_y h)) \cdot  (\varepsilon^2 \partial_t \xi + \lambda^2 \partial_y h) \; dt \, dx \, dy\\
&=\frac{1}{2a\lambda^2}\int_0^T \iint(\nabla^2_w \eta(\bar{w}) \cdot  (\varepsilon^2 \partial_t m + \lambda^2 \partial_x k)) \cdot (\varepsilon^2 \partial_t m + \lambda^2 \partial_x k) \; dt \, dx \, dy\\
&+\frac{1}{2 a \lambda^2} \int_0^T \iint (\nabla^2_w \eta(\bar{w}) \cdot (\varepsilon^2 \partial_t \xi + \lambda^2 \partial_y h)) \cdot  (\varepsilon^2 \partial_t \xi + \lambda^2 \partial_y h) \; dt \, dx \, dy+O(\varepsilon^3).
\end{align*}
\end{remark}
Now we consider the total sum, given by
\begin{align*}
I_1&+I_2+I_3+I_4 \le |\nabla^2_w \eta(\bar{w})|_{L_t^\infty L_x^\infty} \int_0^T \iint |w-\bar{w}|^2 \; dt \, dx \, dy\\
& -\frac{\tau}{2a\lambda^2}\int_0^T \iint\nabla^2_w\eta(\bar{w}) \cdot (\varepsilon^2 \partial_t m + \lambda^2 \partial_x k) \cdot (\varepsilon^2 \partial_t m + \lambda^2 \partial_x k)  \; dt \, dx \, dy\\
& -\frac{\tau}{2a\lambda^2}\int_0^T \iint\nabla^2_w\eta(\bar{w}) \cdot(\varepsilon^2 \partial_t \xi + \lambda^2 \partial_y h) \cdot (\varepsilon^2 \partial_t \xi + \lambda^2 \partial_y h)  \; dt \, dx \, dy\\
& + \frac{\tau}{a \lambda^2} \int_0^T \iint\nabla^2_w\eta(\bar{w}) \cdot(\varepsilon^2 \partial_t m + \lambda^2 \partial_x k)\cdot (\varepsilon^2 \partial_t \bar{m} + \lambda^2 \partial_x \bar{k})  \; dt \, dx \, dy\\
& + \frac{\tau}{a \lambda^2} \int_0^T \iint\nabla^2_w\eta(\bar{w}) \cdot (\varepsilon^2 \partial_t \xi + \lambda^2 \partial_y h) \cdot (\varepsilon^2 \partial_t \bar{\xi} + \lambda^2 \partial_y \bar{h})  \; dt \, dx \, dy\\
& - \frac{c(|\nabla^2_w\eta(w)|_{L_t^\infty L_x^\infty}(1-\frac{1}{\delta}))}{2a\tau \varepsilon^2}\int_0^T \iint |k-2aw|^2+|h-2aw|^2 \; dt \, dx \, dy\\
&-\dfrac{c(|\nabla^2_w\eta(w)|_{L_t^\infty L_x^\infty})}{(1-4a)\tau \varepsilon^2}\int_0^T \iint|4aw-(k+h)|^2\; dt \, dx \, dy+O(\varepsilon^3).\\
\end{align*}

The Gronwall inequality, together with the definition of $w$ in (\ref{w_def}), yields the following estimate

\begin{equation}
\label{zero_order_estimate}
\sup_{t \in [0, T^*]} \frac{\|\rho(t)-\bar{\rho}\|_0}{\varepsilon}+\|\rho \textbf{u}(t)-\bar{\rho}\bar{\textbf{u}}(t)\|_0 \le c \sqrt{\varepsilon},
\end{equation} 

where the local time $T^*$ is defined in (\ref{T_star_def}). 
\end{proof}

Now we prove Theorem \ref{Theorem_entropy_inequality_NS}.

\begin{proof}
We start by using the interpolation properties of Sobolev spaces, see \cite{Taylor}, for $0< s' < s$ and $t \in [0, T^*]$, which gives

\begin{equation}
\label{s'_rate}
\begin{aligned}
\|\rho \textbf{u}(t)-\bar{\rho}\bar{\textbf{u}}(t)\|_{s'} & \le \|\rho \textbf{u}(t)-\bar{\rho}\bar{\textbf{u}}(t)\|_0^{1-s'/s} \|\rho \textbf{u}(t)-\bar{\rho}\bar{\textbf{u}}(t)\|_s^{s'/s}\\
& \le c \varepsilon^{\frac{s-s'}{2s}} (M_0+ cM_0 e^{t|\textbf{u}|_{L_t^\infty L_x^\infty} })^{s'/s},
\end{aligned}
\end{equation}

where the last inequality follows by 
\begin{itemize}
\item the $H^s$- bound of the solution to the incompressible Navier-Stokes equations on the two-dimensional torus, i.e. $$\|\bar{\rho}\bar{\textbf{u}}(t)\|_s \le \|\bar{\rho}\textbf{u}_0\|_s \le M_0;$$
\item  the Gronwall inequality applied to estimate (\ref{s_integral_estimate_KRM_paper}), 
$$\|\rho \textbf{u}(t)\|_s \le cM_0 e^{c(|\rho|_{L_t^\infty L_x^\infty}, |\textbf{u}|_{L_t^\infty L_x^\infty})t}.$$
\end{itemize}

Taking $s'$ big enough, the Sobolev embedding theorem yields
\begin{equation}
\begin{aligned}
\label{L_infty_difference_rhou}
|\rho \textbf{u}(t)-\bar{\rho}\bar{\textbf{u}}(t)|_\infty & \le c_S \|\rho \textbf{u}(t)-\bar{\rho}\bar{\textbf{u}}(t)\|_{s'}\\
& \le c \varepsilon^{\frac{s-s'}{2s}} (M_0+ cM_0 e^{c(|\rho|_{L_t^\infty L_x^\infty}, |\textbf{u}|_{L_t^\infty L_x^\infty})t})^{s'/s},
\end{aligned}
\end{equation}

and so
\begin{align*}
|\rho \textbf{u}(t)|_\infty \le M_0 + c \varepsilon^{\frac{s-s'}{2s}} (M_0+ cM_0 e^{c(|\rho|_{L_t^\infty L_x^\infty}, |\textbf{u}|_{L_t^\infty L_x^\infty})t})^{s'/s}.
\end{align*}

Similarly, 
\begin{equation}
\label{L_infty_difference_rho}
\begin{aligned}
|\rho(t)-\bar{\rho}|_{\infty} & \le c_S\|\rho(t)-\bar{\rho}\|_{s'} \\
& \le  c_S\|\rho(t)-\bar{\rho}\|_0^{1-s'/s}  \|\rho(t)-\bar{\rho}\|_s^{s'/s}\\
& \le c \varepsilon^{\frac{3(s-s')}{2s}} (c \varepsilon M_0 e^{c(|\rho|_{L_t^\infty L_x^\infty}, |\textbf{u}|_{L_t^\infty L_x^\infty})t})^{s'/s},
\end{aligned}
\end{equation}
i.e.

\begin{equation}
\label{L_infty_rho}
|\rho(t)|_\infty \le \bar{\rho}+c \varepsilon^{\frac{3(s-s')}{2s}} (c \varepsilon M_0 e^{c(|\rho|_{L_t^\infty L_x^\infty}, |\textbf{u}|_{L_t^\infty L_x^\infty})t})^{s'/s}.
\end{equation}

Now, since
$$\textbf{u}-\bar{\textbf{u}}=\frac{1}{\rho}(\rho \textbf{u}-\bar{\rho}\bar{\textbf{u}})+\frac{\bar{\textbf{u}}}{\rho}(\bar{\rho}-\rho),$$

then from (\ref{L_infty_difference_rho})-(\ref{L_infty_difference_rhou})-(\ref{L_infty_difference_rho}),

\begin{equation}
\label{L_infy_difference_u}
\begin{aligned}
|\textbf{u}(t)-\bar{\textbf{u}}(t)|_\infty &\le \frac{1}{\bar{\rho}} \Bigg(c \varepsilon^{\frac{s-s'}{2s}} (M_0+ cM_0 e^{c(|\rho|_{L_t^\infty L_x^\infty}, |\textbf{u}|_{L_t^\infty L_x^\infty})t})^{s'/s}\\
&+c \varepsilon^{\frac{3(s-s')}{2s}} (c \varepsilon M_0 e^{c(|\rho|_{L_t^\infty L_x^\infty}, |\textbf{u}|_{L_t^\infty L_x^\infty})t})^{s'/s}\Bigg),
\end{aligned}
\end{equation}

i.e., 

\begin{equation}
\label{L_infty_bound_u}
\begin{aligned}
|\textbf{u}(t)|_\infty & \le M_0 + \frac{1}{\bar{\rho}} \Bigg(c \varepsilon^{\frac{s-s'}{2s}} (M_0+ cM_0 e^{c(|\rho|_{L_t^\infty L_x^\infty}, |\textbf{u}|_{L_t^\infty L_x^\infty})t})^{s'/s}\\
&+c \varepsilon^{\frac{3(s-s')}{2s}} (c \varepsilon M_0 e^{c(|\rho|_{L_t^\infty L_x^\infty}, |\textbf{u}|_{L_t^\infty L_x^\infty})t})^{s'/s}\Bigg).
\end{aligned}
\end{equation}

Recalling the definition of $T^*$ in (\ref{T_star_def}) and taking $M=4M_0,$
estimate (\ref{L_infty_bound_u}) implies that there exists $\varepsilon_0$ fixed such that, for $\varepsilon \le \varepsilon_0$ and $t \le T^*$,
$$|\textbf{u}(t)|_\infty \le M_0 + c \varepsilon^{\frac{1}{2}-\delta} <2M_0=\frac{M}{2},$$
for $0<\delta=\frac{s'}{2s} < \frac{1}{2}$. Similarly, for $t \le T^*$,
\begin{equation}
\label{better_estimate}
\begin{aligned}
\frac{|\rho(t)-\bar{\rho}|_\infty}{\varepsilon}+|\rho \textbf{u}(t)|_\infty \le M_0+c\varepsilon^{\frac{1}{2}-\delta}  < 2M_0=\frac{M}{2}.
\end{aligned}
\end{equation}

Now let us assume $T^* < T^\varepsilon$. Then, by definition (\ref{T_star_def}),
$$\frac{|\rho(T^*)-\bar{\rho}|_\infty}{\varepsilon}+|\rho \textbf{u}(T^*)|_\infty=4M_0=M.$$
On the other hand, estimate (\ref{better_estimate}) implies that there exists a fixed $\varepsilon_0$, depending on $M_0$ and small enough such that, for $\varepsilon \le \varepsilon_0$,
$$\frac{|\rho(T^*)-\bar{\rho}|_\infty}{\varepsilon}+|\rho \textbf{u}(T^*)|_\infty \le M_0 + c \varepsilon^{\frac{1}{2}-\delta} <2M_0.$$

Now, by contradiction one gets that $T^*=T^\varepsilon$ for $\varepsilon \le \varepsilon_0$. As a consequence, for $\varepsilon \le \varepsilon_0$ the solutions $(\rho^\varepsilon, \textbf{u}^\varepsilon)$ to the approximating system evaluated in $T^\varepsilon$ are bounded. This way, the Continuation Principle, see \cite{Majda}, implies that they are globally bounded in time.
Moreover, since the uniform bounds in Lemma \ref{lemma_energy_estimates_KRM_paper}  are based on the $L_t^\infty L_x^\infty$ boundedness of $(\rho^\varepsilon, \textbf{u}^\varepsilon)$, it turns out that they hold globally in time for $\varepsilon \le \varepsilon_0$.
In the end, we proved:
\begin{itemize}
\item the global in time existence and uniform boundedness of $(\rho^\varepsilon, \textbf{u}^\varepsilon)$ in $H^s(\mathbb{T}^2)$ for a fixed $\varepsilon \le \varepsilon_0$ depending on $M_0$;
\item the strong convergence in $[0, T]$, for any $T>0$, of the solutions $(\rho^\varepsilon, \textbf{u}^\varepsilon)$ to the approximating system (\ref{BGK_NS}) to the solutions $(\bar{\rho}, \bar{\textbf{u}})$ to the incompressible Navier-Stokes equations in $H^{s'}(\mathbb{T}^2)$, for $0<s'<s$ and $s>3$;
\item the rate of this strong convergence.
\end{itemize}
Finally, the convergence to the gradient of the limit incompressible pressure $\nabla P^{NS}$ in (\ref{real_NS}) is discussed in details in \cite{Bianchini3}.
\end{proof}

\section*{Acknowledgements}
The author is grateful to Roberto Natalini, for his constant support, and to Laure Saint-Raymond, for useful discussions.
%The author is grateful to Roberto Natalini and Laure Saint-Raymond for useful discussions and support.

\end{document}